\newcommand{\scal}[2]{\langle #1,#2\rangle}
\newcommand{\rr}[1]{\mathbf R^{#1}}
\newcommand{\nm}[2]{\Vert #1\Vert _{#2}}
\newcommand{\NM}[2]{\left \Vert #1\right \Vert _{#2}}
\newcommand{\op}{\operatorname{Op}}
\newcommand{\sets}[2]{\{ \, #1\, ;\, #2\, \} }
\newcommand{\ep}{\varepsilon}
\newcommand{\cdo}{\, \cdot \, }
\newcommand{\vrum}{\vspace{0.1cm}}
\newcommand{\GL}{\mathbf{M}}
\newcommand{\nn}[1]{{\mathbf N}^{#1}}
\newcommand{\maclB}{\mathcal B}
\newcommand{\maclK}{\mathcal K}
\newcommand{\maclM}{\mathcal M}
\newcommand{\maclS}{\mathcal S}
\newcommand{\mascB}{\mathscr B}
\newcommand{\mascF}{\mathscr F}
\newcommand{\mascI}{\mathscr I}
\newcommand{\mascN}{\mathscr N}
\newcommand{\mascP}{\mathscr P}
\newcommand{\mascS}{\mathscr S}
\numberwithin{equation}{section}          
\newtheorem{thm}{Theorem}
\numberwithin{thm}{section}
\newcommand{\rubrik}{}
\newtheorem{prop}[thm]{Proposition}
\newtheorem{lemma}[thm]{Lemma}
\theoremstyle{definition}
\newtheorem{defn}[thm]{Definition}
\theoremstyle{remark}
\newtheorem{rem}[thm]{Remark}
\author{Joachim Toft}
\address{Department of Mathematics,
Linn{\ae}us University, V{\"a}xj{\"o}, Sweden}
\email{joachim.toft@lnu.se}
\title{Schatten properties, nuclearity and minimality of shift invariant spaces}
\keywords{Matrices, Modulation spaces, Non-uniform expansions, 
Minimality, Singular values, Nuclear operators}
\subjclass[2010]{primary: 47Bxx, 45P05, 35S05, 42B35
secondary: 65F35, 46A16, 42C15}
\begin{document}

\begin{abstract}
We extend Feichtinger's minimality property on smallest
non-trivial time-frequency shift invariant Banach spaces, to the
quasi-Banach case. Analogous properties are deduced for
certain matrix classes.

\par

We use these results to prove that the pseudo-differential operator
$\op (a)$ is a Schatten-$q$ operator
from $M^\infty$ to $M^p$ and $r$-nuclear operator from $M^\infty$ to $M^r$
when $a\in M^r$ for suitable $p$, $q$ and $r$ in $(0,\infty ]$.
\end{abstract}

\par

\maketitle

\section{Introduction}\label{sec0}

\par

A remarkable property of the weighted Feichtinger algebra $M^1_{(v)}(\rr d)$
is the minimality among non-trivial Banach spaces which are invariant
under time-frequency shifts with respect to the submultiplicative weight $v$.
(See Section \ref{sec1} for notations.)
More precisely, let $\mascB$ be a Banach space which is continuously embedded
in $\mascS '(\rr d)$, the set of tempered distributions on $\rr d$
(or, more generally, in the set $\Sigma _1'(\rr d)$ Gelfand-Shilov distributions of
Beurling type of order $1$ on $\rr d$). If $\mascB$ is invariant
under time-frequency shifts, $f\mapsto e^{i\scal \cdo \xi}f(\cdo -x)$, and satisfies
\begin{align}
\mascB \, {\textstyle{\bigcap}} \, M^1_{(v)}(\rr d) &\neq \{ 0\} \notag
\intertext{and}
\nm {e^{i\scal \cdo \xi}f(\cdo -x)}{\mascB} &\lesssim \nm f{\mascB}\, v(x,\xi ),
\qquad f\in \mascB ,\ x,\xi \in \rr d,
\label{Eq:TFSInv}
\end{align}
then $M^1_{(v)}(\rr d)\subseteq \mascB$. In fact, the search of non-trivial
smallest translation and modulation invariant Banach space, led Feichtinger
to the discovery of $M^1(\rr d)$ (cf. \cite{Fe1}). Later on, Bonsall deduced
in \cite{Bon1} a slightly different proof compared to \cite{Fe1}, which can also
be found in Section 12.1 in \cite{Gc2}.

\par

In Section \ref{sec2} we extend this minimality property to the case of quasi-Banach spaces.
More precisely,
let $\mascB \subseteq \Sigma _1'(\rr d)$ be a quasi-Banach space such that \eqref{Eq:TFSInv}
holds with a $v$-moderate weight $\omega$ in place of $v$.
Assume in addition that for some $p\in (0,1]$ we have
\begin{equation}\label{Eq:pTriangle}
\nm {f+g}{\mascB}^p\le \nm f{\mascB}^p+\nm g{\mascB}^p,\qquad f,g\in \mascB ,
\end{equation}
and there is a Gabor frame atom $\psi \in \mascB \, {\textstyle{\bigcap}}\, M^p_{(v)}(\rr d)
\neq \{ 0\}$. Then we deduce $M^p_{(\omega )}(\rr d)\subseteq \mascB$ (see Theorem
\ref{Thm:Minimal}). Note that our restrictions on the involved weights are relaxed compared to
\cite{Bon1,Gc2}.

\par

We mainly follow the approach in \cite{Bon1,Fe1,Gc2}. In particular we
consider spaces of non-uniform Gabor expansions whose coefficients obey
suitable weighted $\ell ^p$-estimates, and prove that these spaces agree with corresponding
modulation spaces. (Cf. Proposition \ref{Prop:MCalMIdent}.)

\par

In Section \ref{sec2} we also deduce analogous minimality properties for certain classes
of matrix operators. In particular, let $J$ be an index set, $\mascB$ be a quasi-Banach space
of matrices $(a(j,k))_{j,k\in J}$ such that the following is true:
\begin{itemize}
\item $A_{j_0,k_0}=(\delta _{j,j_0}\delta _{k,k_0})_{j,k\in J}\in \mascB$ and
$\nm {A_{j_0,k_0}}{\mascB}\le C$ for some constant $C$ which is independent of
$j_0,k_0\in J$;

\vrum

\item $\nm {A_1+A_2}{\mascB}^p\le \nm {A_1}{\mascB}^p+\nm {A_2}{\mascB}^p$.
\end{itemize}
Then we prove that $\mathbb U^p(J)$, the set of all matrices $(a(j,k))_{j,k\in J}$
such that $\sum _{j,k}|a(j,k)|^p<\infty$, is continuously embedded in $\mascB$.

\par

In Sections \ref{sec3} and \ref{sec4} we apply the results in Section \ref{sec2} to deduce
Schatten-von Neumann and nuclear properties for pseudo-differential operators with
symbols in modulation spaces, when acting on (other) modulation spaces.
More precisely, let $a\in M^r_{(\omega )}(\rr {2d})$, $0<r\le 1$. If $r=1$
and all involved weights are trivially equal to $1$ everywhere, then
it is proved in \cite{GH1} that the pseudo-differential operator
$\op (a)$ is continuous from $M^\infty _{(\omega _1)}(\rr d)$ to
$M^r_{(\omega _2)}(\rr d)$, and a
Schatten-von Neumann operator of order $p$ on $L^2(\rr d)=M^2(\rr d)$. The
latter Schatten-von Neumann result was remarked to hold already in \cite{Sj1}.
For general $p\in (0,1]$ and for weighted modulation spaces, these
continuity and Schatten-von Neumann results were extended in \cite{Toft13}.
(See \cite[Theorems 3.1 and 3.4]{Toft13}.)

\par

For certain choices of weights, Theorem 3.4 in \cite{Toft13} was improved in
\cite[Corollary 5.1]{DeRuWa} from which it follows that if $\omega _1=\omega _2$
and $\omega$ are suitable weights, $r\in (0,1]$ and $a\in M^r_{(\omega)}(\rr {2d})$,
then $\op (a)$ is $r$-nuclear on any $M^{p_1,q_1}_{(\omega _1)}(\rr d)$ for any
$p_1,q_1\in [1,\infty ]$. (See also
\cite{DeRu1,DeRu2,DeRu3} for recent progress on $r$-nuclearity of operators
acting on Lebesgue type spaces, \cite{DeRuWa2} for some
extensions to other families of Banach spaces, and \cite{DeRuTo}
for analogous investigations for operators acting on elements defined
on manifolds with boundary.)

\par

In Section \ref{sec3} we apply the minimality properties from Section \ref{sec2}
to deduce that if $p,q,r\in (0,\infty ]$ satisfies
$$
\frac 1r -1\ge \max \left ( \frac 1p-1,0 \right ) + \max
\left ( \frac 1q-1,0 \right ) +\frac 1q,
$$
the weights $\omega _1$, $\omega _2$ and $\omega$
obey the same (relaxed) conditions as in \cite{Toft13} and that
$a\in M^r_{(\omega )}(\rr {2d})$, then $\op (a)$ belongs to
$\mascI _q(M^\infty _{(\omega _1)}(\rr d),M^p_{(\omega _2)}(\rr d))$,
the set of Schatten-von Neumann operators of order $p$ from
$M^\infty _{(\omega _1)}(\rr d)$ to $M^p_{(\omega _2)}(\rr d)$. If
$q=\infty$ and $p\le 1$, then $r=p$, and we recover \cite[Theorems 3.1]{Toft13},
i.{\,}e. $\op (a)$ is continuous from $M^\infty _{(\omega _1)}(\rr d)$ to
$M^r_{(\omega _2)}(\rr d)$ when $a\in M^r_{(\omega )}(\rr {2d})$.
%

\par

In Section \ref{sec4} the minimality results in Section \ref{sec2}
are applied to deduce $p$-nuclearity for the pseudo-differential operators above.
In fact, let $p\in (0,1]$, $\omega _1$, $\omega _2$ and $\omega$
be as above and let
$a\in M^p_{(\omega )}(\rr {2d})$. Then it is shown that $\op (a)$ belongs to
$\mascN _p(M^\infty _{(\omega _1)}(\rr d),M^p_{(\omega _2)}(\rr d))$,
the set of $p$-nuclear operators from
$M^\infty _{(\omega _1)}(\rr d)$ to $M^p_{(\omega _2)}(\rr d)$. This improves
some of the results in \cite{DeRuWa2} where it is shown that 
$\op (a)$ belongs to
$\mascN _p(M^{p_0,q_0} _{(\omega _1)}(\rr d),M^{p_0,q_0}_{(\omega _2)}(\rr d))$,
provided $p_0,q_0\in [1,\infty]$ and $\omega _1=\omega _2$ are weights of
polynomial type.

\par

By using suitable embedding results between modulation spaces and other types of
function and distribution spaces, the results in Sections \ref{sec3} and \ref{sec4} also
leads to certain Schatten-von Neumann and $r$-nuclearity results in the framework
of such spaces. For example, in these sections we may replace the modulation spaces
by suitable Besov spaces, by using suitable embeddings between such spaces
(see \cite{Ok,SuTo,Toft4} for such embeddings in the Banach space case,
and \cite{WaHu} in the more general quasi-Banach space case).

\par

Finally we remark about recent progresses by J. Delgado, M. Ruzhansky,
N. Togmagambetov and B. Wang on $p$-nuclearity
and Schatten-von Neumann properties in various contexts.
In \cite{DeRu1} optimal conditions on memberships in Schatten-von Neumann classes
over $L^2$ were obtained in terms of Sobolev regularity of the kernels.
In \cite{DeRu2,DeRu3,DeRuWa2}, $p$-nuclearity of operators
acting on Lebesgue type spaces are considered. Especially, in \cite{DeRuWa2} some
conditions for $p$-nuclearity of operators between weighted modulation and
other Banach spaces were obtained in terms of symbols.
Furthermore, in
\cite{DeRuTo} analogous investigations are performed for operators
acting on elements defined on manifolds with boundary.

\par

\section*{Acknowledgement}
I am grateful to H. Feichtinger for encouraging discussions during
his visit at Linn{\ae}us university, autumn 2016. I am also grateful to J. Delgado and
M. Ruzhansky for fruitful discussions on the subject of the paper.

\par

\section{Preliminaries}\label{sec1}

\par

In this section we recall some basic facts of quasi-Banach spaces,
Gelfand-Shilov spaces and modulation spaces.

\par

We start by introducing some notations on quasi-Banach spaces. A quasi-norm
$\nm \cdo{\mascB}$ on a vector space $\mascB$ over
$\mathbf C$ is a non-negative function $\nm \cdo{\mascB}$
on $\mascB$ which is non-degenerate in the sense
$$
\nm f{\mascB}=0\quad \Longleftrightarrow \quad f=0,\qquad f\in \mascB ,
$$
and fulfills
\begin{equation}\label{weaktriangle}
\begin{alignedat}{2}
\nm {\alpha f}{\mascB} &= |\alpha |\cdot \nm f{\mascB}, &\qquad 
f &\in \mascB ,\ \alpha \in \mathbf C
\\[1ex]
\text{and}\qquad \nm {f+g}{\mascB} &\le 2^{\frac 1p-1}(\nm f{\mascB} +\nm g{\mascB}),
&\qquad f,g &\in \mascB , 
\end{alignedat}
\end{equation}
for some constant $p\in (0,1]$ which is independent of $f,g\in \mascB$. Then
$\mascB$ is a topological vector space when the topology for $\mascB$
is defined by $\nm \cdo{\mascB}$, and $\mascB$ is called a quasi-Banach
space if $\mascB$ is complete under this topology.

\par

In general, the conditions \eqref{weaktriangle} does not need to imply that
\begin{equation}\label{Eq:pTriangleIneq}
\nm {f+g}{\mascB}^p \le \nm {f}{\mascB}^p + \nm {g}{\mascB}^p,
\qquad f,g\in \mascB
\end{equation}
should hold. On the other hand, by Aiko-Rolewicz theorem it follows that
we may replace the quasi-norm in
\eqref{weaktriangle} by an equivalent one, and which also satisfies
\eqref{Eq:pTriangleIneq}
(cf. \cite{Aik,Rol}). Here we note that if $\nm \cdo{\mascB}$ is a quasi-norm
which satisfies \eqref{Eq:pTriangleIneq}, then \eqref{weaktriangle}
holds.

\par

From now on we assume that the quasi-norm of $\mascB$
is chosen such that \eqref{weaktriangle} and \eqref{Eq:pTriangleIneq}
hold true.

\par

\subsection{The Gelfand-Shilov space
$\Sigma _1(\rr d)$ and its distribution space}

\par

Next we recall the definition of $\Sigma _1(\rr d)$
and its distribution space $\Sigma _1'(\rr d)$, which belong to the family of
Gelfand-Shilov spaces and their duals (cf. e.{\,}g. \cite{GS}). 
Let $h\in \mathbf R_+$ be fixed. Then $\mathcal S_{1,h}(\rr d)$
is the set of all $f\in C^\infty (\rr d)$ such that
\begin{equation*}
\nm f{\mathcal S_{1,h}}\equiv \sup \frac {|x^\beta \partial ^\alpha
f(x)|}{h^{|\alpha | + |\beta |}\alpha !\, \beta !}
\end{equation*}
is finite. Here the supremum is taken over all $\alpha ,\beta \in
\mathbf N^d$ and $x\in \rr d$.

\par

Obviously $\mathcal S_{1,h}\subseteq
\mathscr S$ is a Banach space which increases with $h$.
Furthermore, $\mathcal S_{1,h}$ contains all finite linear combinations of Hermite functions
$h_\alpha (x)=H_\alpha (x)e^{-|x|^2/2}$, where $H_\alpha$ is the Hermite polynomial
of order $\alpha \in \nn d$.
Since such linear combinations are dense in $\mathscr S$, it follows
that the ($L^2$-)dual $(\mathcal S_{1,h})'(\rr d)$ of $\mathcal S_{1,h}(\rr d)$ is
a Banach space which contains $\mathscr S'(\rr d)$.

\par

The \emph{Gelfand-Shilov space} $\Sigma _1(\rr d)$ is the projective limit
of $\mathcal S_{1,h}(\rr d)$ with respect to $h$. This implies that
\begin{equation}\label{GSspacecond1}
\Sigma _{1}(\rr d) =\bigcap _{h>0}\mathcal
S_{1,h}(\rr d)
\end{equation}
is a Fr{\'e}chet space with semi norms $\nm \cdo{\mathcal S_{1,h}}$,
$h>0$.

\medspace

The \emph{Gelfand-Shilov distribution space} $\Sigma _1'(\rr d)$ is the
inductive limit of $\mathcal S_{1,h}'(\rr d)$ with respect to $h>0$.  Hence
\begin{equation}\tag*{(\ref{GSspacecond1})$'$}
\Sigma _1'(\rr d) =\bigcup _{h>0} \mathcal S_{1,h}'(\rr d).
\end{equation}
We remark that $\Sigma _1'(\rr d)$ is the dual of
$\Sigma _1(\rr d)$, also in topological sense (cf. \cite{Pil1}).

\par

From now on we let $\mathscr F$ be the Fourier transform,
given by
$$
(\mathscr Ff)(\xi )= \widehat f(\xi ) \equiv (2\pi )^{-d/2}\int _{\rr
{d}} f(x)e^{-i\scal  x\xi }\, dx
$$
when $f\in L^1(\rr d)$. Here $\scal \cdo \cdo$ denotes the
usual scalar product on $\rr d$. The map $\mathscr F$ extends 
uniquely to homeomorphisms on $\mathscr S'(\rr d)$ and
$\Sigma _1'(\rr d)$, and restricts to homeomorphisms on
$\mathscr S(\rr d)$ and $\Sigma _1(\rr d)$, and to a unitary
operator on $L^2(\rr d)$.

\medspace

Next we recall some mapping properties of Gelfand-Shilov
spaces under short-time Fourier transforms.
Let $\phi \in \mathscr S(\rr d)$ be fixed. For every $f\in
\mathscr S'(\rr d)$, the \emph{short-time Fourier transform} $V_\phi
f$ is the distribution on $\rr {2d}$ defined by the formula
\begin{equation}\label{defstft}
(V_\phi f)(x,\xi ) =\mathscr F(f\, \overline{\phi (\cdo -x)})(\xi ) =
(f,\phi (\cdo -x)e^{i\scal \cdo \xi}).
\end{equation}
We recall that if $T(f,\phi )\equiv V_\phi f$ when $f,\phi \in \Sigma _1(\rr d)$,
then $T$ is uniquely extendable to sequently continuous mappings
\begin{alignat*}{2}
T\, &:\, & \Sigma _1'(\rr d)\times \Sigma _1(\rr d) &\to
\Sigma _1 '(\rr {2d})\bigcap C^\infty (\rr {2d}),
\\[1ex]
T\, &:\, & \Sigma _1'(\rr d)\times \Sigma _1'(\rr d) &\to
\Sigma _1'(\rr {2d})
\end{alignat*}
(cf. \cite{CPRT10,Toft8}).
We also note that $V_\phi f$ takes the form
\begin{equation}\tag*{(\ref{defstft})$'$}
V_\phi f(x,\xi ) =(2\pi )^{-d/2}\int _{\rr d}f(y)\overline {\phi
(y-x)}e^{-i\scal y\xi}\, dy
\end{equation}
for admissible $f$. 

\par

There are several characterizations of Gelfand-Shilov
spaces and their distribution spaces. For example, they can
easily be characterized by Hermite functions and other
related functions (cf. e.{\,}g. \cite{GrPiRo,JaEi}). They can
also be characterized by suitable estimates of their Fourier
and Short-time Fourier transforms (cf.
\cite{ChuChuKim,GrZi,Toft8}).

\par

\subsection{Weight functions}

\par

Next we recall some facts on weight
functions. A \emph{weight} on $\rr d$ is a positive function $\omega
\in  L^\infty _{loc}(\rr d)$ such that $1/\omega \in  L^\infty _{loc}(\rr d)$.
In the sequel we assume that $\omega$ is \emph{moderate},
or \emph{$v$-moderate} for some positive function $v \in
 L^\infty _{loc}(\rr d)$. This means that
\begin{equation}\label{moderate}
\omega (x+y) \lesssim \omega (x)v(y),\qquad x,y\in \rr d.
\end{equation}
Here $A\lesssim B$ means that $A\le cB$
for a suitable constant $c>0$, and for future references, we
write $A\asymp B$
when $A\lesssim B$ and $B\lesssim A$. 
We note that \eqref{moderate} implies that $\omega$ fulfills
the estimates
\begin{equation}\label{moderateconseq}
v(-x)^{-1}\lesssim \omega (x)\lesssim v(x),\quad x\in \rr d.
\end{equation}
We let $\mascP _E(\rr d)$ be the sets of all moderate weights on $\rr d$.

\par

It can be proved that if $\omega \in \mascP _E(\rr d)$, then
$\omega$ is $v$-moderate for some $v(x) = e^{r|x|}$, provided the
positive constant $r$ is chosen large enough (cf. \cite{Gc2.5}). In particular,
\eqref{moderateconseq} shows that for any $\omega \in \mascP
_E(\rr d)$, there is a constant $r>0$ such that
\begin{equation}\label{WeightExpEst}
e^{-r|x|}\lesssim \omega (x)\lesssim e^{r|x|},\quad x\in \rr d.
\end{equation}

\par

We say that $v$ is
\emph{submultiplicative} if $v$ is \emph{even} and \eqref{moderate}
holds with $\omega =v$. In the sequel, $v$ always stand for a
submultiplicative weight if nothing else is stated.

\par

\subsection{Classes of matrices}\label{subsec1.8}

\par

It is suitable for us to consider matrix classes with respect to general
index sets.

\par

\begin{defn}\label{matrixset1}
Let $p\in (0,\infty ]$, $J_1$ and $J_2$ be index sets, $\boldsymbol J =J_2\times J_1$
and let $\omega$ be a map
from $\boldsymbol J$ to $\mathbf R_+$.
\begin{enumerate}
\item The set $\mathbb U_0'(\boldsymbol J)$ consists of all formal
matrices $(a(j_2,j_1))_{(j_2,j_1)\in \boldsymbol J}$ whose matrix
elements $a(j_2,j_1)$ belongs to $\mathbf C$;

\vrum

\item The set $\mathbb U_0(\boldsymbol J)$ consists of all $A=(a(j_2,j_1))
_{(j_2,j_1)\in \boldsymbol J} \in \mathbb U_0'(\boldsymbol J)$ such that at most
finite numbers of $a(j_2,j_1)$ are non-zero;

\vrum

\item The set $\mathbb U^{p}(\omega ,\boldsymbol J )$ consists of all
$A=(a(j_2,j_1))_{(j_2,j_1)\in \boldsymbol J}\in \mathbb U_0'(\boldsymbol J)$
such that
$$
\nm A{\mathbb U^{p}(\omega ,\boldsymbol J )} \equiv \nm {a\cdot \omega}
{\ell ^p(\boldsymbol J)}
$$
is finite.
\end{enumerate}
\end{defn}

\par

For conveniency we set  $\mathbb U^{p}(\boldsymbol J)=
\mathbb U^{p}(\omega ,\boldsymbol J)$ when $\omega =1$ everywhere in
Definition \ref{matrixset1}. Furthermore, if $J_1=J_2=J$, then we set $\mathbb U^{p}(\omega ,J)
=\mathbb U^{p}(\omega ,\boldsymbol J)$ and $\mathbb U^{p}(J)
=\mathbb U^{p}(\boldsymbol J)$.

\par

\subsection{Modulation spaces}\label{subsec1.2}

\par

Next we define modulation spaces.
Let $\phi \in \Sigma _1(\rr d)\setminus 0$. For any $p,q\in (0.\infty ]$
and $\omega \in \mascP _E(\rr {2d})$,
the (standard) modulation space $M^{p,q}_{(\omega )}(\rr d)$
is the set of all $f\in \Sigma _1'(\rr d)$ such that $V_\phi f\in
L^{p,q}_{(\omega )}(\rr {2d})$, and we equip $M^{p,q}_{(\omega )}(\rr d)$
with the quasi-norm
\begin{equation}\label{modnorm2}
f\mapsto \nm f{M^{p,q}_{(\omega )}}\equiv \nm {V_\phi f}{L^{p,q}_{(\omega )}}.
\end{equation}
%
%
For conveniency we also set $M^p_{(\omega )}=M^{p,p}_{(\omega )}$, and
remark that $M^{p,q}_{(\omega )}(\rr d)$ 
is one of the most common types of modulation spaces. It was introduced
by Feichtinger in \cite{Fei1} for certain choices of $\omega$. We also set
$M^{p,q}=M^{p,q}_{(\omega)}$ and $M^{p}=M^{p}_{(\omega)}$ when
$\omega =1$.

\par

In the following proposition we list some properties for modulation,
and refer to \cite{Fei1,FG1,GaSa, Gc2,Toft5} for proofs.

\begin{prop}\label{p1.4B}
Let $r\in (0,1]$, $p,p_j,q_j\in (0,\infty ]$ and $\omega ,\omega _j,v\in
\mascP  _{E}(\rr {2d})$, $j=1,2$, be such that $r\le p, p_j,q,q_j$,
$p _1\le p _2$, $q_1\le q_2$,  $\omega _2\lesssim \omega _1$, and
let $\omega$ be $v$-moderate. Then the following is true:
\begin{enumerate}
\item if $\phi \in M^r_{(v)}(\rr d)\setminus 0$, then
$f\in M^{p,q}_{(\omega )}(\rr d)$, if and only if
\eqref{modnorm2} is finite.
In particular, $M^{p,q}_{(\omega )}(\rr d)$ is independent
of the choice of $\phi \in M^r_{(v)}(\rr d)\setminus 0$.
Moreover, $M^{p,q}_{(\omega )}(\rr d)$ is a quasi-Banach
space under the quasi-norm in \eqref{modnorm2}, and different
choices of $\phi$ give rise to equivalent quasi-norms;

\vrum

\item[{\rm{(2)}}] $M^{p _1,q_1}_{(\omega _1)}(\rr d)\subseteq
M^{p _2,q_2}_{(\omega _2)}(\rr d)$.
\end{enumerate}
\end{prop}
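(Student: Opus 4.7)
My plan is to derive both statements from the pointwise switch-of-window
estimate for the short-time Fourier transform, combined with a
quasi-Banach Young-type convolution inequality. The starting point is
the bound
$$
|V_{\phi_2}f(x,\xi)|\ \lesssim\
\bigl(|V_{\phi_1} f|*|V_{\phi_2}\phi_1|^{*}\bigr)(x,\xi),
\qquad F^{*}(x,\xi):=F(-x,-\xi),
$$
valid for $f\in\Sigma _1'(\rd)$ and non-zero $\phi_1,\phi_2\in\Sigma _1(\rd)$.
It is a direct consequence of the STFT reproducing formula on $L^2$, and
extends to $\phi_1,\phi_2\in M^r_{(v)}(\rd)$ by continuity of the short-time
Fourier transform on Gelfand--Shilov pairs.

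For part (1), fix any such $\phi_1,\phi_2$. The $v$-moderateness of $\omega$
converts the above into an essentially unweighted convolution
$\omega(x,\xi)|V_{\phi_2}f(x,\xi)|\lesssim
\bigl((\omega|V_{\phi_1}f|)*(v|V_{\phi_2}\phi_1|^{*})\bigr)(x,\xi)$.
Applying a quasi-Banach Young-type inequality of the form
$\nm{F*G}{L^{p,q}}\lesssim \nm{F}{L^{p,q}}\nm{G}{L^r}$, valid for
$r\in(0,1]$ with $r\le\min(p,q)$ and proved via Gabor-lattice
discretisation and the embedding $\ell^r\hookrightarrow\ell^1$, yields
$\nm{f}{M^{p,q}_{(\omega)},\phi_2}\lesssim
\nm{V_{\phi_2}\phi_1}{L^r_{(v)}}\cdot\nm{f}{M^{p,q}_{(\omega)},\phi_1}$.
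The right-hand factor is finite since $\phi_1\in M^r_{(v)}(\rd)$, and
exchanging the roles of the two windows gives the reverse inequality.
Hence the definition of $M^{p,q}_{(\omega)}(\rd)$ is independent of the
window, with equivalent quasi-norms for different admissible choices.
Completeness follows since $V_\phi$ is a quasi-isometry from
$M^{p,q}_{(\omega)}(\rd)$ onto its image in the complete quasi-Banach
space $L^{p,q}_{(\omega)}(\rdd)$; this image is closed, being
characterised as the fixed-point set of convolution with a constant
multiple of $V_\phi\phi$, an operator that is continuous on
$L^{p,q}_{(\omega)}$ by the same Young estimate.

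For part (2), apply the switch-of-window bound with $\phi_1=\phi_2=\phi$ to
obtain $|V_\phi f|\lesssim |V_\phi f|*|V_\phi\phi|^{*}$, and apply the same
Young inequality, this time with source $L^{p_1,q_1}_{(\omega _1)}$ and
target $L^{p_2,q_2}_{(\omega _2)}$. The hypotheses $p_1\le p_2$,
$q_1\le q_2$ render the Young exponents admissible, with the gains coming
from the inclusion side absorbed into the $L^r$-norm of the autocorrelation
$V_\phi\phi$. The relation $\omega _2\lesssim\omega _1$ combined with
$v$-moderateness keeps the required moderating weight dominated by $v$,
so the right-hand side is finite, giving the embedding.

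The main obstacle I anticipate is the quasi-Banach mixed-norm Young
inequality for $r\in(0,1]$: the classical Minkowski-based proof breaks
down below $r=1$, so one must either discretise via a Gabor-lattice
argument --- close in spirit to the minimality theme of the paper --- or
invoke the Aoki--Rolewicz renormalisation from Section \ref{sec1}
together with interpolation between extreme cases. Once this building
block is in place, the remainder is a routine application of the
switch-of-window formula, by now standard in modulation-space theory.
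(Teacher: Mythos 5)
First, note that the paper does not actually prove Proposition \ref{p1.4B}: it is stated as a list of known properties with references to \cite{Fei1,FG1,GaSa,Gc2,Toft5}, the quasi-Banach case ($\min(p,q)<1$) being due to Galperin--Samarah and Toft. Your outline follows exactly the route of those references (change-of-window estimate plus a convolution relation), so the strategy is the standard one rather than an alternative.

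There is, however, a genuine gap at the step you yourself flag as the ``main obstacle''. The Young-type inequality $\nm {F*G}{L^{p,q}}\lesssim \nm F{L^{p,q}}\nm G{L^r}$ is \emph{false} for general $F,G$ when $\min (p,q)<1$, not merely hard to prove: for $p<1$ the space $L^p(\rr {2d})$ contains functions that are not even locally integrable, so convolution with a smooth bump need not be defined, let alone bounded. The correct statement, which is what \cite{GaSa} proves, requires the second factor to lie in the Wiener amalgam space $W(L^\infty ,L^r_{(v)})$ and the first factor to have the same kind of local boundedness (which short-time Fourier transforms of $M^r_{(v)}$-windows do have); the hypothesis $\phi \in M^r_{(v)}(\rr d)$ with $r\le \min (p,q,1)$ is there precisely to guarantee $V_{\phi _2}\phi _1\in W(L^\infty ,L^r_{(v)})$. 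Of your two proposed repairs, the Gabor-lattice discretisation is the right one, but it forces you to reformulate the key lemma in amalgam-space terms and to carry that reformulation through parts (1) and (2); the Aoki--Rolewicz-plus-interpolation route cannot succeed, since no renormalisation or interpolation will establish an inequality that is false as stated. The same issue undermines your completeness argument: the reproducing-kernel projection $F\mapsto c\, F*V_\phi \phi$ is not continuous on $L^{p,q}_{(\omega )}(\rr {2d})$ when $\min (p,q)<1$, so the image of $V_\phi$ cannot be exhibited as the fixed-point set of a bounded operator on that ambient space; one must instead work inside $W(L^\infty ,L^{p,q}_{(\omega )})$, where the operator is bounded. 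With these amalgam-space corrections installed, the rest of your argument (window independence by symmetry, and the embedding in (2) via $\ell ^{p_1,q_1}\subseteq \ell ^{p_2,q_2}$ after discretisation together with $\omega _2\lesssim \omega _1$) goes through.
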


\par

\subsection{Gabor analysis and modulation spaces}

\par

Next we define Gabor atoms of certain orders.

\par

\begin{defn}\label{Def:GaborAtom}
Let $v\in \mascP _E(\rr {2d})$ be submultiplicative, $J$ be a countable
set and let $\psi _1\in \Sigma _1'(\rr d)$. Then $\psi _1$ is called a \emph{Gabor atom}
of order $p\in (0,1]$ with respect to $v$, if $\psi _1\in M^p_{(v)}(\rr d)\setminus 0$, and there
exist $\psi _2\in M^p_{(v)}(\rr d)\setminus 0$ and lattices $\{ x_j \} _{j\in J}$
and $\{\xi _k \} _{k\in J}$ in $\rr d$ such that
$$
\big \{ \psi _1(\cdo -x_j)e^{i\scal {\cdo}{\xi _k} } \big \} _{j,k\in J}
\quad \text{and}\quad
\big \{ \psi _2(\cdo -x_j)e^{i\scal {\cdo}{\xi _k} } \big \} _{j,k\in J}
$$
are dual Gabor frames to each others.
\end{defn}

\par

\begin{rem}\label{Rem:GaborAtomOrderOne}
By \cite[Theorem S]{Gc1} it follows that every $\psi \in M^1_{(v)}(\rr d)
\setminus 0$ is a Gabor atom of order $1$.
\end{rem}

\par

\begin{rem}\label{Rem:GaborAtomOrderLessThanOne}
Let $p\in (0,1]$ and $v\in \mascP _E(\rr d)$. By the previous
remark, \cite[Theorem S]{Gc1},
and Remark 1.10 and Theorem 3.7 in \cite{Toft12} it follows that
the set of Gabor atoms of order $p$ contains $\Sigma _1(\rr d)
\setminus 0$. In particular, the set of such atoms is non-empty.

\par

We also remark that by \cite{KSV} it follows that the canonical
dual window of an element in $\Sigma _1(\rr d)\setminus 0$ belongs to
$\Sigma _1(\rr d)$. In fact, let $\mathcal K_1(\rr d)$ be as in \cite{KSV}, then
it is clear that
$\mathcal K_1(\rr d)\cap \mathscr F(K_1(\rr d))= \Sigma _1(\rr d)$
in view of \cite{ChuChuKim}.
By \cite{KSV} it follows that we may choose both $\phi _1$, $\phi _2$
and their dual windows in $\Sigma _1(\rr d)$.
\end{rem}

\par

Assume that $\omega ,v\in \mascP _E(\rr {2d})$,
$p,q\in (0,\infty ]$ and $r\in (0,1]$ are such that $r\le p,q$ and $\omega$
is $v$-moderate, and $\psi _1$, $\psi _2$, $\{x_j \}_{j\in J}$ and $\{ \xi _k\} _{k\in J}$
are the same as in Definition \ref{Def:GaborAtom}. By \cite[Theorem 3.7]{Toft12}
it follows that $f\in \Sigma _1'(\rr d)$ belongs to $M^{p,q}_{(\omega )}(\rr d)$, if and
only if $f$ is given by
$$
f=\sum _{j,k\in J}c_{j,k}e^{i\scal \cdo \xi _k}\psi _1(\cdo -x_j),
$$
where $c_{j,k}= (V_{\psi _2}f)(x_j,\xi _k )$ and satisfies
\begin{equation}\label{Eq:DiscreteQuasiNorm}
\left ( \sum _{k\in J} \left ( \sum _{j\in J}|c_{j,k}
\omega (x_j,\xi _k)|^p\right )^{\frac qp}\right )^{\frac 1p}<\infty .
\end{equation}
Furthermore, the left-hand side of \eqref{Eq:DiscreteQuasiNorm} defines
a quasi-norm in $M^{p,q}_{(\omega )}$, which is equivalent to $f\mapsto
\nm f{M^{p,q}_{(\omega )}}$. (For more facts of such properties, see e.{\,}g.
\cite{FG1,GaSa,Gc2,Rau1,Rau2,Toft12}.)

\par

Next we introduce topological spaces of non-uniform Gabor expansions.

\par

\begin{defn}\label{Def:CalMpDef}
Let $p\in (0,1]$, $v\in \mascP _E(\rr {2d})$ be submultiplicative,
$\omega \in \mascP _E(\rr {2d})$ be $v$-moderate, and let
$\psi \in \Sigma _1'(\rr d)$ be a Gabor atom of order
$p$ with respect to $v$. Then $\maclM ^p_{(\omega )}(\rr d)=
\maclM ^p_{\psi ,(\omega )}(\rr d)$ is the set of all non-uniform Gabor expansions
\begin{equation}\label{Eq:fGaborExp}
f= \sum _{n=0}^\infty a_ne^{i\scal \cdo {\xi _n}}
\psi (\cdo -x_n)
\end{equation}
such that
\begin{equation}\label{Eq:CalMpNormDef}
\nm f{\maclM ^p_{(\omega )}} \equiv \inf \left ( \sum _{n=0}^\infty |a_n\omega (x_n,\xi _n)|^p
\right )^{\frac 1p}
\end{equation}
is finite, where $\sets {(x_n,\xi _n)\in }{n\in \mathbf N}$ is an arbitrary countable set in
$\rr {2d}$. Here the infimum in \eqref{Eq:CalMpNormDef} is taken over all
representatives \eqref{Eq:fGaborExp} of $f$.
\end{defn}

\par

\begin{rem}
By Proposition \ref{Prop:MCalMIdent} in Section \ref{sec2} it follows that $\maclM ^p_{(\omega )}(\rr d)$
is independent of $\psi$ in Definition \ref{Def:GaborAtom}. 
\end{rem}

\par

\subsection{Pseudo-differential operators}

\par

Next we recall some properties in pseudo-differential calculus.
Let $\GL (d,\Omega)$ be the set of $d\times d$-matrices with
entries in the set $\Omega$, $a\in \maclS _s 
(\rr {2d})$, and let $A\in \GL (d,\mathbf R)$ be fixed. Then the
pseudo-differential operator $\op _A(a)$
is the linear and continuous operator on $\Sigma _1 (\rr d)$, given by
\begin{equation}\label{e0.5}
(\op _A(a)f)(x)
=
(2\pi  ) ^{-d}\iint a(x-A(x-y),\xi )f(y)e^{i\scal {x-y}\xi }\,
dyd\xi .
\end{equation}
For general $a\in \Sigma _1'(\rr {2d})$, the
pseudo-differential operator $\op _A(a)$ is defined as the continuous
operator from $\Sigma _1(\rr d)$ to $\Sigma _1'(\rr d)$ with
distribution kernel
\begin{equation}\label{atkernel}
K_{a,A}(x,y)=(2\pi )^{-d/2}(\mascF _2^{-1}a)(x-A(x-y),x-y).
\end{equation}
Here $\mascF _2F$ is the partial Fourier transform of $F(x,y)\in
\Sigma _1'(\rr {2d})$ with respect to the $y$ variable. This
definition makes sense since the mappings
\begin{equation}\label{homeoF2tmap}
\mascF _2\quad \text{and}\quad F(x,y)\mapsto F(x-A(x-y),x-y)
\end{equation}
are homeomorphisms on $\Sigma _1'(\rr {2d})$.
In particular, the map $a\mapsto K_{a,A}$ is a homeomorphism on
$\Sigma _1'(\rr {2d})$.

\par

The standard (Kohn-Nirenberg) representation, $a(x,D)=\op (a)$, and
the Weyl quantization $\op ^w(a)$ of $a$ are obtained by choosing
$A=0$ and $A=\frac 12 I$, respectively, in \eqref{e0.5} and \eqref{atkernel},
where $I$ is the identity matrix

\par

\begin{rem}\label{BijKernelsOps}
By Fourier's inversion formula, \eqref{atkernel} and the kernel theorem
\cite[Theorem 2.2]{LozPerTask} for operators from
Gelfand-Shilov spaces to their duals,
it follows that the map $a\mapsto \op _A(a)$ is bijective from $\Sigma _1'(\rr {2d})$
to the set of all linear and continuous operators from $\Sigma _1(\rr d)$
to $\Sigma _1'(\rr {2d})$.
\end{rem}

\par

By Remark \ref{BijKernelsOps}, it follows that for every $a_1\in \Sigma _1'(\rr {2d})$
and $A_1,A_2\in \GL (d,\mathbf R)$, there is a unique $a_2\in \Sigma _1'(\rr {2d})$ such that
$\op _{A_1}(a_1) = \op _{A_2} (a_2)$. By Section 18.5 in \cite{Ho1},
the relation between $a_1$ and $a_2$
is given by
\begin{equation}
\label{calculitransform}
\op _{A_1}(a_1) = \op _{A_2}(a_2)
\quad \Longleftrightarrow \quad
a_2(x,\xi )=e^{i\scal {(A_1-A_2)D_\xi}{D_x}}a_1(x,\xi ).
\end{equation}
Here we note that the operator $e^{i\scal {AD_\xi}{D_x}}$ is homeomorphic
on $\Sigma _1(\rr {2d})$ and its dual
(cf. \cite{CaTo,CarWal,Tr}). For modulation spaces we have the following
subresult of Proposition 2.8 in \cite{Toft16}.

\par

\begin{prop}\label{Prop:ExpOpSTFT}
Let $s\ge \frac 12$, $A\in \GL (d,\mathbf R)$, $p,q\in (0,\infty ]$,
$\phi ,a\in \Sigma _1(\rr {2d})$ and let $T_A = e^{i\scal{AD_\xi}{D_x}}$.
If $\omega \in \mascP _E(\rr {4d})$ and
$$
\omega _A(x,\xi ,\eta ,y) = \omega (x+Ay,\xi +A^*\eta ,\eta ,y),
$$
then $T_A$ from $\Sigma _1(\rr {2d})$ to $\Sigma _1(\rr {2d})$
extends uniquely to a homeomorphism from $M^{p,q}_{(\omega )}(\rr {2d})$
to $M^{p,q}_{(\omega _A)}(\rr {2d})$, and
\begin{equation}\label{Eq:ExpOpModSp}
\nm {T_Aa}{M^{p,q}_{(\omega _A)}} \asymp
\nm a{M^{p,q}_{(\omega )}}.
\end{equation}
\end{prop}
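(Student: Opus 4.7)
The strategy is to produce a pointwise identity between the short-time Fourier transform of $T_Aa$ with a window $\Phi$ and the short-time Fourier transform of $a$ with a suitably twisted window $\widetilde \Phi$, and then to obtain \eqref{Eq:ExpOpModSp} by a change of variables that exactly absorbs the factor relating $\omega$ to $\omega_A$.

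Since $D_x$ and $D_\xi$ commute and the quadratic form $L=\scal{AD_\xi}{D_x}$ is formally self-adjoint, $T_A=e^{iL}$ satisfies $T_A^*=T_{-A}$. Taking $\Phi \in \Sigma _1(\rr{2d})$ and writing $X=(x,\xi)$ for position and $\Xi=(\eta,y)$ for frequency, I would begin from
$$
V_\Phi(T_Aa)(X,\Xi)=\scal{a}{T_{-A}M_\Xi T_X\Phi},
$$
and then move $T_{-A}$ to the right. Since $T_{-A}$ is a Fourier multiplier with symbol $m(\zeta_1,\zeta_2)=e^{-i\scal{A\zeta_2}{\zeta_1}}$, it commutes with $T_X$ and obeys $m(D)M_\Xi=M_\Xi m(D+\Xi)$. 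Expanding $m(D+\Xi)$ after the substitution $\zeta\mapsto\zeta+\Xi$, the cross terms factor as a scalar phase $e^{-i\scal{Ay}{\eta}}$ and two translations, namely by $Ay$ in the $x$-variable and by $A^*\eta$ in the $\xi$-variable. Setting $\widetilde\Phi=T_{-A}\Phi$, which belongs to $\Sigma _1(\rr{2d})$ because $T_A$ is a homeomorphism on $\Sigma _1$ (see \cite{CaTo,CarWal,Tr}), these manipulations yield
$$
|V_\Phi(T_Aa)(x,\xi,\eta,y)|=|V_{\widetilde\Phi}a(x+Ay,\,\xi+A^*\eta,\,\eta,\,y)|,
$$
first for $a\in\Sigma _1(\rr{2d})$ and, by continuity of $T_A$ on $\Sigma _1'(\rr{2d})$ together with continuity of the STFT, for every $a\in\Sigma _1'(\rr{2d})$.

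The proof then concludes by a volume-preserving substitution. Multiplying the pointwise identity above by $\omega_A(x,\xi,\eta,y)=\omega(x+Ay,\xi+A^*\eta,\eta,y)$ and setting $x'=x+Ay$, $\xi'=\xi+A^*\eta$ (with $\eta,y$ held fixed; the Jacobian is $1$), the $L^{p,q}$-norm, weighted by $\omega_A$, of $V_\Phi(T_Aa)$ equals the $L^{p,q}$-norm, weighted by $\omega$, of $V_{\widetilde\Phi}a$. Since $\widetilde\Phi\in\Sigma _1(\rr{2d})\subseteq M^r_{(v)}(\rr{2d})$ for every admissible $r$ and $v$, Proposition \ref{p1.4B}(1) ensures that $\widetilde\Phi$ produces a quasi-norm on $M^{p,q}_{(\omega )}(\rr{2d})$ equivalent to the one produced by $\Phi$, which gives \eqref{Eq:ExpOpModSp}. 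Running the same argument with $A$ replaced by $-A$ and using the evident relation $(\omega_A)_{-A}=\omega$ yields the converse bound, and the homeomorphism statement then follows.

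The main obstacle is the bookkeeping in the commutation step: carrying the scalar phase $e^{-i\scal{Ay}{\eta}}$ through the computation (it is harmless once absolute values are taken, but must be produced consistently) and identifying correctly which cross term in $m(D+\Xi)$ generates a shift in $x$ versus one in $\xi$. The rest of the proof reduces to a routine change of variables and an appeal to the window-independence of the modulation quasi-norm.
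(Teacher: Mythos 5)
Your argument is correct, and it is essentially the standard proof of this fact. Note that the paper itself does not prove Proposition \ref{Prop:ExpOpSTFT}: it is stated as a ``subresult of Proposition 2.8 in \cite{Toft16}'', so you are supplying the argument that the paper outsources, and what you supply is the expected one. The key identity
$$
|V_\Phi (T_Aa)(x,\xi ,\eta ,y)|=|V_{T_{-A}\Phi}a(x+Ay,\xi +A^*\eta ,\eta ,y)|
$$
is right, including the signs: writing $T_{-A}M_\Xi T_X\Phi = e^{-i\scal {Ay}{\eta}}M_\Xi T_{X+(Ay,A^*\eta )}(T_{-A}\Phi )$ with $\Xi =(\eta ,y)$ dual to $X=(x,\xi )$ produces exactly the shift $(+Ay,+A^*\eta )$ appearing in $\omega _A$, and the scalar phase disappears under the modulus. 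The change of variables acts only on the inner (position) variables of the mixed $L^{p,q}$-norm with $(\eta ,y)$ fixed and has Jacobian $1$, so the $L^{p,q}_{(\omega _A)}$-quasi-norm of $V_\Phi (T_Aa)$ equals the $L^{p,q}_{(\omega )}$-quasi-norm of $V_{T_{-A}\Phi}a$; since $T_{-A}\Phi \in \Sigma _1(\rr {2d})$ by the homeomorphism property of $T_A$ on $\Sigma _1$ (noted in the paper with references \cite{CaTo,CarWal,Tr}), Proposition \ref{p1.4B}(1) converts this into \eqref{Eq:ExpOpModSp}. Two small points you handle correctly and are worth keeping explicit: (i) because the STFT identity holds for every $a\in \Sigma _1'(\rr {2d})$, the norm equivalence is obtained directly on all of $\Sigma _1'$, which sidesteps any density issue when $p$ or $q$ equals $\infty$ (the ``unique extension'' is just the restriction of the $\Sigma _1'$-homeomorphism); and (ii) the inverse direction follows from $(\omega _A)_{-A}=\omega$, as you say.
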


\par

We also recall that $\op _A(a)$ is a rank-one operator, i.{\,}e.
\begin{equation}\label{trankone}
\op _A(a)f=(2\pi )^{-d/2}(f,f_2)f_1, \qquad f\in \Sigma _1(\rr d),
\end{equation}
for some $f_1,f_2\in \Sigma _1'(\rr d)$,
if and only if $a$ is equal to the \emph{$A$-Wigner distribution}
\begin{equation}\label{wignertdef}
W_{f_1,f_2}^{A}(x,\xi ) \equiv \mascF (f_1(x+A\cdo
)\overline{f_2(x-(I-A)\cdo )} )(\xi ),
\end{equation}
of $f_1$ and $f_2$. If in addition $f_1,f_2\in L^2(\rr d)$, then $W_{f_1,f_2}^{A}$
takes the form
\begin{equation}\label{wignertdef2}
W_{f_1,f_2}^{A}(x,\xi ) = (2\pi )^{-d/2}\int _{\rr d} f_1(x+Ay)
\overline{f_2(x-(I-A)y)}e^{-i\scal y\xi} \, dy.
\end{equation}
(Cf. \cite{BoDoOl1}.) Since the Weyl case is of peculiar interests,
we also set $W_{f_1,f_2}=W_{f_1,f_2}^{A}$
when $A=\frac 12 I$.

\par

\subsection{Schatten-von Neumann classes and nuclear operators}\label{subsec1.4}

\par

Next we recall some Schatten-von Neumann properties of operators, and
start to consider a general situation, involving linear operators from a
(quasi-)Banach space to an other (quasi-)Banach space. (Cf. e.{\,}g. \cite{
Si,BS,Toft11,TB,ToKhNiNo}.)
Let $\mascB _1$ and $\mascB _2$ be (quasi-)Banach spaces and let $T$ be a
linear operator from $\mascB  _1$ to $\mascB _2$. The singular value of
$T$ of order $j\ge 1$ is defined as
$$
\sigma _j(T) = \sigma _j(T;\mascB _1 , \mascB _2 )
\equiv \inf \nm {T-T_0}{\mascB _1\to \mascB _2},
$$
where the infimum is taken over all linear operators $T_0$ from $\mascB _1$ to
$\mascB _2$ of rank at most $j-1$.
The operator $T$ is said to be a Schatten-von Neumann operator of order
$p\in (0,\infty ]$ if
\begin{equation}\label{SchattenNormBanach}
\nm T{\mascI _p(\mascB _1,\mascB _2)} \equiv \nm {\{ \sigma _j(T) \} _{j\ge 1}}{\ell ^p}
\end{equation}
is finite. The set of Schatten-von Neumann operators from $\mascB _1$ to $\mascB _2$
of order $p\in (0,\infty ]$ is denoted by $\mascI _p(\mascB _1,\mascB_2)$.
We observe that $\mascI _p(\mascB _1,\mascB _2)$ is contained in
$\maclK (\mascB _1,\mascB _2)$, the set of compact operators from $\mascB _1$
to $\mascB _2$, when $p<\infty$. Furthermore, $\mascI _\infty (\mascB _1,\mascB _2)$
agrees with $\maclB (\mascB _1,\mascB _2)$, the set of linear bounded operators
from $\mascB _1$ to $\mascB _2$.

\par

If $A\in \GL (d,\mathbf R)$ and
$$
\Sigma _1(\rr d) \subseteq \mascB _1,\mascB _2 \subseteq
\Sigma _1'(\rr d)
$$
with continuous embeddings, then
we let $s_{A,p}(\mascB _1,\mascB_2)$ be the set of all $a\in \Sigma _1'(\rr {2d})$
such that $\op _A(a)\in \mascI _p(\mascB _1,\mascB _2)$, and we set
$$
\nm a{s_{A,p}(\mascB _1,\mascB_2)}\equiv \nm {\op _A(a)}
{\mascI _p(\mascB _1,\mascB_2)}.
$$

\par

Next we define nuclear operators. Let $\mascB _0$ be a Banach space with dual
$\mascB _0'$, $\mascB$ be a quasi-Banach space,
$r\in (0,1]$ and let $T$ be a linear and continuous operator from $\mascB _0$
to $\mascB$. Then $T$ is called \emph{$r$-nuclear} from $\mascB _0$ to
$\mascB$, if there are sequences $\{ \ep _j\} _{j=1}^\infty \subseteq \mascB _0'$
and $\{ e_j\} _{j=1}^\infty \subseteq \mascB$ such that
\begin{align}
&T = \sum _{j=1}^\infty e_j\otimes \ep _j\label{Eq:rNuclDef}
\intertext{with convergence in $\maclB (\mascB _0,\mascB )$, and}
&
\sum _{j=1}^\infty \nm {\ep _j}{\mascB _0'}^r\nm {e_j}{\mascB}^r<\infty .
\label{Eq:NuclQNormEst}
\end{align}
The set of $r$-nuclear operators from $\mascB _0$ to
$\mascB$ is denoted by $\mascN _r(\mascB _0,\mascB )$, and we equip
this set by the quasi-norm
$$
\nm T{\mascN _r(\mascB _0,\mascB )} \equiv \inf 
\left (
\sum _{j=1}^\infty \nm {\ep _j}{\mascB _0'}^r\nm {e_j}{\mascB}^r
\right ) ^{\frac 1r},
$$
where the infimum is taken over all representatives $\{ \ep _j\} _{j=1}^\infty \subseteq \mascB _0'$
and $\{ e_j\} _{j=1}^\infty \subseteq \mascB$ such that \eqref{Eq:rNuclDef} and
\eqref{Eq:NuclQNormEst} hold true.

\par

We note that \eqref{Eq:rNuclDef} is the same as
$$
Tf = \sum _{j=1}^\infty \scal f{\ep _j}e_j.
$$
By straight-forward computations it follows that $\nm \cdo{\mascN _r(\mascB _0,\mascB )}$
is a quasi-norm of order $r$, and that $\mascN _r(\mascB _0,\mascB )$ is complete.
Hence, $\mascN _r(\mascB _0,\mascB )$ is a quasi-norm space of order $r>0$.

\par

Later on we need the following result which shows that $p$-nuclearity is
stable under linear continuous mappings.

\par

\begin{prop}\label{Prop:NuclQBanach}
Let $p,r\in (0,1]$, $\mascB _k$ be quasi-Banach spaces of order
$p$, $\mascB _{0,k}$ be Banach spaces, $k=1,2$, and let
$$
T_1\, :\, \mascB _{0,2}\to \mascB _{0,1}
\quad \text{and}\quad
T_2\, :\, \mascB _1\to \mascB _2.
$$
Then the following is true:
\begin{enumerate}
\item if $T\in \mascI _r(\mascB _{0,1},\mascB _1)$, then
$T_2\circ T\circ T_1\in \mascI _r(\mascB _{0,2},\mascB _2)$,
and
\begin{equation}\label{Eq:SchattComp}
\nm {T_2\circ T\circ T_1}{\mascI _r(\mascB _{0,2},\mascB _2)}
\lesssim \nm {T_1}{\maclB (\mascB _{0,2},\mascB _{0,1})}\nm {T_2}{\maclB (\mascB _1,\mascB _2)}
\nm T{ \mascI _r(\mascB _{0,1},\mascB _1)}\text ;
\end{equation}

\vrum

\item if $T\in \mascN _p(\mascB _{0,1},\mascB _1)$, then
$T_2\circ T\circ T_1\in \mascN _p(\mascB _{0,2},\mascB _2)$,
and
\begin{equation}\label{Eq:NuclComp}
\nm {T_2\circ T\circ T_1}{\mascN _p(\mascB _{0,2},\mascB _2)}
\le \nm {T_1}{\maclB (\mascB _{0,2},\mascB _{0,1})}\nm {T_2}{\maclB (\mascB _1,\mascB _2)}
\nm T{ \mascN _p(\mascB _{0,1},\mascB _1)}.
\end{equation}
\end{enumerate}
\end{prop}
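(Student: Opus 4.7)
The plan is to proceed directly from the definitions of singular values and of $r$-nuclear representations, exploiting that both constructions behave functorially under left and right composition with bounded operators. No approximation arguments or machinery from the earlier sections should be needed; the statement is essentially a stability property of operator ideals, and the only thing to check is that the proofs go through in the quasi-Banach setting, which is routine because the operator quasi-norms are still submultiplicative.

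For part (1), I would work at the level of singular values. The key observation is that if $T_0\colon \mascB _{0,1}\to \mascB _1$ has rank at most $j-1$, then so does $T_2\circ T_0\circ T_1$. Combined with submultiplicativity of the operator quasi-norm this gives
\begin{equation*}
\nm {T_2\circ T\circ T_1-T_2\circ T_0\circ T_1}{\mascB _{0,2}\to \mascB _2}
\le \nm {T_1}{\maclB (\mascB _{0,2},\mascB _{0,1})}\nm {T_2}{\maclB (\mascB _1,\mascB _2)}\nm {T-T_0}{\mascB _{0,1}\to \mascB _1}.
\end{equation*}
Taking the infimum over rank-$(j-1)$ operators $T_0$, the rank-$(j-1)$ operators $T_2\circ T_0\circ T_1$ form a subfamily of the ones used to define $\sigma _j(T_2\circ T\circ T_1;\mascB _{0,2},\mascB _2)$, so
\begin{equation*}
\sigma _j(T_2\circ T\circ T_1;\mascB _{0,2},\mascB _2)
\le \nm {T_1}{\maclB (\mascB _{0,2},\mascB _{0,1})}\nm {T_2}{\maclB (\mascB _1,\mascB _2)}\sigma _j(T;\mascB _{0,1},\mascB _1).
\end{equation*}
Taking $\ell ^r$-norms in $j$ then yields \eqref{Eq:SchattComp}.

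For part (2), I would pass through nuclear representations. Given a representation $T=\sum _{j}e_j\otimes \ep _j$ satisfying \eqref{Eq:rNuclDef} and \eqref{Eq:NuclQNormEst} with $r=p$, the natural candidate representation is
\begin{equation*}
T_2\circ T\circ T_1=\sum _{j=1}^\infty (T_2e_j)\otimes (\ep _j\circ T_1),
\end{equation*}
where $\ep _j\circ T_1\in \mascB _{0,2}'$ by continuity of $T_1$. The norm bounds $\nm {\ep _j\circ T_1}{\mascB _{0,2}'}\le \nm {T_1}{}\nm {\ep _j}{\mascB _{0,1}'}$ and $\nm {T_2e_j}{\mascB _2}\le \nm {T_2}{}\nm {e_j}{\mascB _1}$, raised to the $p$-th power and summed, give
\begin{equation*}
\sum _{j=1}^\infty \nm {\ep _j\circ T_1}{\mascB _{0,2}'}^p\nm {T_2e_j}{\mascB _2}^p
\le (\nm {T_1}{}\nm {T_2}{})^p\sum _{j=1}^\infty \nm {\ep _j}{\mascB _{0,1}'}^p\nm {e_j}{\mascB _1}^p,
\end{equation*}
and taking the infimum over representations of $T$ gives \eqref{Eq:NuclComp}.

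The only step that needs a moment of care is justifying that the displayed series for $T_2\circ T\circ T_1$ actually converges in $\maclB (\mascB _{0,2},\mascB _2)$, not just formally. This is immediate from the fact that pre- and post-composition with a fixed bounded operator are continuous linear maps between the respective operator spaces, so convergence of the partial sums of $\sum _j e_j\otimes \ep _j$ in $\maclB (\mascB _{0,1},\mascB _1)$ transfers to convergence of the composed series in $\maclB (\mascB _{0,2},\mascB _2)$ with limit $T_2\circ T\circ T_1$. I do not foresee a genuine obstacle; the whole proposition is a direct translation of the classical Banach-space argument into the quasi-Banach setting, where \eqref{Eq:pTriangleIneq} is what makes the $p$-th power summations behave correctly.
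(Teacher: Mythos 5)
Your proof is correct and follows essentially the same route as the paper: for part (2) the paper also composes the nuclear representation $\sum_j e_j\otimes\ep_j$ with $T_1$ and $T_2$, writing $T_1^*\ep_j$ for your $\ep_j\circ T_1$, and concludes by the same norm estimates and infimum; for part (1) the paper simply cites the literature, and your singular-value argument is the standard one it refers to. No gaps.
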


\par

Proposition \ref{Prop:NuclQBanach} is well-known in the literature. For example,
(1) follows immediately from (4.5) and (4.6) in \cite{ChSiTo}. In order to be self-contained
and show some ideas we give a short proof of (2).

\par

\begin{proof}
Let $e_j$ and $\ep _j$ be the same as in Subsection \ref{subsec1.4}
with $\mascB _1$ and $\mascB _{0,1}$ in place of $\mascB _1$ and
$\mascB _{0,1}$, respectively, and let
$f\in \mascB _{0,1}$, $g\in \mascB _{0,2}$.
Then
\begin{align*}
(T\circ T_1)g &= \sum _{j=1}^\infty \scal g{T_1^*\ep _j}e_j 
\quad \text{and}\quad
(T_2\circ T)f = \sum _{j=1}^\infty \scal {f}{\ep _j}T_2e_j 
\intertext{where}
\sum _{j=1}^\infty \nm {T_1^*\ep _j}{\mascB _{0,2}'}^p\nm {e_j}{\mascB _1}^p 
&\le
\nm {T_1}{\maclB (\mascB _{0,2},\mascB _{0,1})}^p
\sum _{j=1}^\infty \nm {\ep _j}{\mascB _{0,1}'}^p\nm {e_j}{\mascB _1}^p.
\intertext{and}
\sum _{j=1}^\infty \nm {\ep _j}{\mascB _{0,1}'}^p\nm {T_2e_j}{\mascB _2}^p 
&\le
\nm {T_2}{\maclB (\mascB _1,\mascB _2)}^p
\sum _{j=1}^\infty \nm {\ep _j}{\mascB _{0,1}'}^p\nm {e_j}{\mascB _1}^p
\end{align*}
The result now follows by combining these estimates and taking the infimum over
all representatives in \eqref{Eq:NuclQNormEst}.
\end{proof}

\par

If $A\in \GL (d,\mathbf R)$ and
$$
\Sigma _1(\rr d) \subseteq \mascB ,\mascB _0 \subseteq
\Sigma _1'(\rr d)
$$
with continuous embeddings, then
we let $u_{A,r}(\mascB _0,\mascB )$ be the set of all $a\in \Sigma _1'(\rr {2d})$
such that $\op _A(a)\in \mascN _r(\mascB _0,\mascB )$, and we set
$$
\nm a{u_{A,r}(\mascB _0,\mascB )}\equiv \nm {\op _A(a)}
{\mascN _r(\mascB _0,\mascB )}.
$$

\par

\section{Identification and minimization
properties of $M^p_{(v)}$ and $\mathbb U^p(\omega ,J)$,
when $p\in (0,1]$}\label{sec2}

\par

In this section we show that $\maclM ^p_{(\omega )}(\rr d)$ agrees with
$M^p_{(\omega )}(\rr d)$ when $p\in (0,1]$. We also prove that
$M^p_{(\omega )}(\rr d)$ is minimal among those quasi-Banach
spaces $\mascB \subseteq \Sigma _1'(\rr d)$
which satisfies \eqref{Eq:TFSInv}, \eqref{Eq:pTriangleIneq} and
$\psi \in \mascB$ for some Gabor atom $\psi$ of order $p$ with
respect to $v$. By using similar technique we show analogous minimality properties
of $\mathbb U^p(\omega ,J)$. 

\par

\subsection{Minimality of $M^p_{(\omega )}$}
First we show that $\maclM ^p_{(\omega )}$ is a quasi-Banach space with quasi-norm
\eqref{Eq:CalMpNormDef}.

\par

\begin{prop}\label{CalMpComplete}
Let $p\in (0,1]$, $v\in \mascP _E(\rr {2d})$ be submultiplicative, $\omega \in \mascP
_E(\rr {2d})$ be $v$-moderate and let $\psi \in \Sigma _1'(\rr d)$ be a Gabor atom of order
$p$ with respect to $v$.
Then $\maclM ^p_{(\omega )}$ is a quasi-Banach space with quasi-norm
\eqref{Eq:CalMpNormDef}.
\end{prop}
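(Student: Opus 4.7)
The plan is to verify in order: (i) the expression \eqref{Eq:CalMpNormDef} is a $p$-quasi-norm, (ii) it is non-degenerate via a continuous embedding $\maclM ^p_{(\omega )}\hookrightarrow M^p_{(\omega )}$, and (iii) $\maclM ^p_{(\omega )}$ is complete.

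For (i), positive homogeneity is immediate from the definition. For the $p$-triangle inequality, given $\varepsilon >0$, pick representations $f=\sum _n a_ne^{i\scal \cdo {\xi _n}}\psi (\cdo -x_n)$ and $g=\sum _m b_me^{i\scal \cdo {\eta _m}}\psi (\cdo -y_m)$ with $\sum _n|a_n\omega (x_n,\xi _n)|^p\le \nm f{\maclM ^p_{(\omega )}}^p+\ep$ and analogously for $g$. Concatenating these series gives a representation of $f+g$, so taking the infimum yields $\nm {f+g}{\maclM ^p_{(\omega )}}^p\le \nm f{\maclM ^p_{(\omega )}}^p+\nm g{\maclM ^p_{(\omega )}}^p$.

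For (ii), fix a window $\phi \in M^p_{(v)}(\rr d)\setminus 0$. Since $\omega$ is $v$-moderate and $v$ is even, the change-of-variable computation on the short-time Fourier transform gives
$$
\nm {e^{i\scal \cdo {\xi _0}}\psi (\cdo -x_0)}{M^p_{(\omega )}}\lesssim \omega (x_0,\xi _0)\nm \psi {M^p_{(v)}},\qquad (x_0,\xi _0)\in \rr {2d}.
$$
Because $M^p_{(\omega )}$ is itself a $p$-quasi-Banach space (Proposition \ref{p1.4B}), the $p$-triangle inequality applied termwise to any representation \eqref{Eq:fGaborExp} yields
$$
\nm f{M^p_{(\omega )}}^p\le \sum _n|a_n|^p\nm {e^{i\scal \cdo {\xi _n}}\psi (\cdo -x_n)}{M^p_{(\omega )}}^p\lesssim \nm \psi {M^p_{(v)}}^p\sum _n|a_n\omega (x_n,\xi _n)|^p,
$$
and taking the infimum gives $\nm f{M^p_{(\omega )}}\lesssim \nm \psi {M^p_{(v)}}\nm f{\maclM ^p_{(\omega )}}$. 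Together with the continuous embedding $M^p_{(\omega )}\hookrightarrow \Sigma _1'(\rr d)$, this delivers the non-degeneracy and also shows that any series \eqref{Eq:fGaborExp} with $\sum _n|a_n\omega (x_n,\xi _n)|^p<\infty$ converges in $M^p_{(\omega )}$, hence unambiguously defines $f\in \Sigma _1'(\rr d)$.

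For (iii), take a Cauchy sequence $\{ f_k\} \subseteq \maclM ^p_{(\omega )}$. Extract a subsequence (still labelled $f_k$) with $\nm {f_{k+1}-f_k}{\maclM ^p_{(\omega )}}^p<2^{-k}$ and choose representations $f_{k+1}-f_k=\sum _{n}a_n^{(k)}e^{i\scal \cdo {\xi _n^{(k)}}}\psi (\cdo -x_n^{(k)})$ with $\sum _n|a_n^{(k)}\omega (x_n^{(k)},\xi _n^{(k)})|^p<2^{-k+1}$, as well as a representation of $f_1$ whose coefficients have $\ell ^p(\omega )$-quasi-norm at most $\nm {f_1}{\maclM ^p_{(\omega )}}+1$. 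Concatenating all these coefficients into a single sequence yields a series of the form \eqref{Eq:fGaborExp}; its coefficients have a finite $\ell ^p(\omega )$-quasi-norm by the $p$-triangle inequality for $\ell ^p$, so by step (ii) the series converges in $M^p_{(\omega )}$, hence in $\Sigma _1'$, to some element $f$. The same partial-sum estimate shows that $f_k\to f$ in $\maclM ^p_{(\omega )}$. A routine argument then promotes convergence of the subsequence to convergence of the whole Cauchy sequence.

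The main obstacle is the interplay between the abstract infimum in \eqref{Eq:CalMpNormDef} and the concrete convergence of the series in $\Sigma _1'$; step (ii), which leverages the $v$-moderateness of $\omega$ together with the $p$-quasi-Banach structure of $M^p_{(\omega )}$, is what makes the definition coherent and ultimately gives the completeness argument in (iii) its meaning.
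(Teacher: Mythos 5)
Your proposal is correct and follows essentially the same route as the paper: concatenation of near-optimal representations for the $p$-triangle inequality, a termwise estimate on time-frequency shifts of $\psi$ (using $v$-moderateness of $\omega$) for non-degeneracy, and a rapidly Cauchy subsequence with concatenated coefficient sequences for completeness. The only variation is that you establish non-degeneracy via the embedding into $M^p_{(\omega)}$ (anticipating one half of Proposition \ref{Prop:MCalMIdent}), whereas the paper gets by with the weaker bound $\nm f{M^\infty_{(\omega)}}\le \nm\psi{M^\infty_{(v)}}\nm f{\maclM^p_{(\omega)}}$; your version has the small added benefit of justifying convergence of the defining series in $M^p_{(\omega)}$, hence in $\Sigma_1'(\rr d)$.
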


\par

\begin{proof}
First we prove that $\nm f{\maclM ^p_{(\omega )}}\neq 0$ when
$f\neq 0$.
Choose $(x_n,\xi _n)_{n\ge 0}$ such that \eqref{Eq:fGaborExp} holds. Then
\begin{multline*}
|V_\phi f(x,\xi )\omega (x,\xi )|^p \le \left ( \sum _{n=0}^\infty |a_n|\, 
|V_\phi \psi (x-x_n,\xi -\xi _n)\omega (x,\xi )|\right )^p
\\[1ex]
\le
\sum _{n=0}^\infty |a_n\omega (x_n,\xi _n)|^p
|V_\phi \psi (x-x_n,\xi -\xi _n)v (x-x_n,\xi -\xi _n)|^p
\\[1ex]
\le \nm \psi{M^\infty _{(v)}}\sum _{n=0}^\infty |a_n\omega (x_n,\xi _n)|^p .
\end{multline*}
By taking the supremum over all $(x,\xi )$ and the infimum over all representatives
\eqref{Eq:fGaborExp}, we get
$$
0<\nm {f}{M^\infty _{(\omega )}} \le \nm \psi{M^\infty _{(v)}} \nm f{\maclM ^p_{(\omega )}},
$$
which shows that $\nm f{\maclM ^p_{(\omega )}}=0$, if and only if $f=0$.

\par

Next let $\ep >0$ be arbitrary, $f,g\in \maclM ^p_{(\omega )}(\rr d)$, and choose
representatives \eqref{Eq:fGaborExp} of $f$ and
$$
g= \sum _{n=0}^\infty b_ne^{i\scal \cdo {\eta _n}}
\psi (\cdo -y_n)
$$
such that
\begin{align*}
\sum _{n=0}^\infty |a_n\omega (x_n,\xi _n)|^p
&\le \nm f{\maclM ^p_{(\omega )}}^p +\frac \ep 2
\intertext{and}
\sum _{n=0}^\infty |b_n\omega (y_n,\eta _n)|^p
&\le \nm g{\maclM ^p_{(\omega )}}^p+\frac \ep 2 .
\end{align*}
This gives,
\begin{multline*}
\nm {f+g}{\maclM ^p_{(\omega )}} ^p
\le
\sum _{n=0}^\infty \Big (|(a_n\omega (x_n,\xi _n)|^p+|b_n\omega (y_n,\eta _n)|^p
\Big )
\\[1ex]
\le \nm f{\maclM ^p_{(\omega )}}^p +\nm g{\maclM ^p_{(\omega )}}^p + \ep .
\end{multline*}
Hence,
$$
\nm {f+g}{\maclM ^p_{(\omega )}}^p \le \nm f{\maclM ^p_{(\omega )}}^p
+\nm g{\maclM ^p_{(\omega )}}^p,
$$
since $\ep >0$ was arbitrarily chosen. This implies that $\maclM ^p_{(\omega )}$
is a quasi-normed space of order $p$.

\par

The completeness follows by standard arguments. More precisely, 
let $\{ f_m \} _{m=1}^\infty$ be a Cauchy sequence in
$\maclM ^p_{(\omega )}(\rr d)$. Then there is an increasing
sequence, $\{ m_k\} _{k\ge 1}$, of positive integers such that
$$
\nm {f_{m_{k}}-f_{m_{k-1}}}{\maclM ^p_{(\omega )}}^p\le 2^{-k},\qquad k\ge 2.
$$
For every $k\ge 0$, there are sequences
$$
\{ a_{n,k} \} _{n=0}^\infty \subseteq \mathbf C,
\quad
\{ x_{n,k} \} _{n=0}^\infty \subseteq \rr d
\quad \text{and}\quad 
\{ \xi _{n,k} \} _{n=0}^\infty \subseteq \rr d
$$
such that
\begin{align*}
\sum _{n=0}^\infty a_{n,k}e^{i\scal \cdo {\xi _{n,k}}}\psi (\cdo -x_{n,k})
&=
{
\begin{cases}
f_{m_1},\quad &k=1
\\[1ex]
f_{m_k}-f_{m_{k-1}},\quad &k\ge 2
\end{cases}
}
\intertext{and}
\sum _{n=0}^\infty |a_{n,k}\omega (x_{n,k},\xi _{n,k})|^p
&=
{
\begin{cases}
\nm {f_{m_1}}{\maclM ^p_{(\omega )}}^p+1,\quad &k=0
\\[1ex]
\nm {f_{m_k}-f_{m_{k-1}}}{\maclM ^p_{(\omega )}}^p+2^{-k},\quad &k\ge 1 .
\end{cases}
}
\end{align*}

\par

Now let $\{ (c_n,z_n,\zeta _n)\} _{n=0}^\infty$ be an enumeration of
$\{ (a_{n,k},x_{n,k},\xi _{n,k})\} _{n,k=0}^\infty$, and let
$$
f=\sum _{n=0}^\infty c_ne^{i\scal \cdo {\zeta _n}}\psi (\cdo -z_n).
$$
Then $\nm f{\maclM ^p_{(\omega )}}<\infty$, and
\begin{multline*}
\nm {f-f_{m_k}}{\maclM ^p_{(\omega )}}^p 
=
\NM{\sum _{j=k+1}^\infty (f_{m_j}-f_{m_{j-1}})}{\maclM ^p_{(\omega )}}^p
\\[1ex]
\le
\sum _{j=k+1}^\infty \sum _{n=0}^\infty |a_{n,j}(x_{n,j},\xi _{n,j})|^p
\le
\sum _{j=k+1}^\infty
\left ( \nm {f_{m_j}-f_{m_{j-1}})}{\maclM ^p_{(\omega )}}^p +2^{-j}\right )
\\[1ex]
\le
2\sum _{j=k+1}^\infty 2^{-j}\to 0
\end{multline*}
%
%
%
%
%
as $k$ tends to $\infty$. This in turn gives
$$
\nm {f-f_m}{\maclM ^p_{(\omega )}}^p
\le
\nm {f-f_{m_k}}{\maclM ^p_{(\omega )}}^p +
\nm {f_{m_k}-f_m}{\maclM ^p_{(\omega )}}^p \to 0 ,
$$
as $m$ and $k$ tends to $\infty$, and the completeness of
$\maclM ^p_{(\omega )}(\rr d)$ follows.
\end{proof}

\par

\begin{prop}\label{Prop:MCalMIdent}
Let $p\in (0,1]$, $v\in \mascP _E(\rr {2d})$ be submultiplicative
$\omega \in \mascP _E(\rr {2d})$ be $v$-moderate
and let $\psi \in \Sigma _1'(\rr d)$ be a Gabor atom of order
$p$ with respect to $v$. Then $\maclM ^p_{\psi ,(\omega )}(\rr d)
= M^p_{(\omega )}(\rr d)$ with
equivalent quasi-norms.
\end{prop}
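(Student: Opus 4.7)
My plan is to prove the two inclusions separately, using the submultiplicativity of $v$, $v$-moderateness of $\omega$, the $p$-triangle inequality valid for $p\in (0,1]$, and the discrete characterization \eqref{Eq:DiscreteQuasiNorm} of modulation spaces via Gabor atoms from \cite[Theorem 3.7]{Toft12}.

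For the inclusion $\maclM ^p_{(\omega )}(\rr d)\subseteq M^p_{(\omega )}(\rr d)$, I would fix an arbitrary window $\phi \in \Sigma _1(\rr d)\setminus 0$. Note that $\phi \in M^p_{(v)}(\rr d)$ by the embedding $\Sigma _1(\rr d)\subseteq M^p_{(v)}(\rr d)$, so Proposition \ref{p1.4B}(1) allows $\phi$ as defining window. Given a representative \eqref{Eq:fGaborExp} of $f$, a direct change of variables in \eqref{defstft}$'$ shows $|V_\phi (e^{i\scal \cdo {\xi _n}}\psi (\cdo -x_n))(x,\xi )| =|V_\phi \psi (x-x_n,\xi -\xi _n)|$. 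Since $p\le 1$, one may absorb the sum under a $p$-th power and obtain
$$
|V_\phi f(x,\xi )\omega (x,\xi )|^p\le \sum _{n=0}^\infty |a_n\omega (x,\xi )|^p |V_\phi \psi (x-x_n,\xi -\xi _n)|^p.
$$
Using $v$-moderateness, $\omega (x,\xi )\lesssim \omega (x_n,\xi _n)v(x-x_n,\xi -\xi _n)$, so
$$
|V_\phi f(x,\xi )\omega (x,\xi )|^p\lesssim \sum _{n=0}^\infty |a_n\omega (x_n,\xi _n)|^p |V_\phi \psi (x-x_n,\xi -\xi _n)v(x-x_n,\xi -\xi _n)|^p.
$$
Integrating both sides in $(x,\xi )$ and translating inside the integral yields
$$
\nm f{M^p_{(\omega )}}^p\lesssim \nm \psi {M^p_{(v)}}^p\sum _{n=0}^\infty |a_n\omega (x_n,\xi _n)|^p.
$$
Taking the infimum over all representatives gives $\nm f{M^p_{(\omega )}}\lesssim \nm \psi {M^p_{(v)}}\nm f{\maclM ^p_{(\omega )}}$.

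For the reverse inclusion $M^p_{(\omega )}(\rr d)\subseteq \maclM ^p_{(\omega )}(\rr d)$, I would invoke the Gabor atom property of $\psi$ directly. By Definition \ref{Def:GaborAtom}, there exist a dual window $\psi _2\in M^p_{(v)}(\rr d)\setminus 0$ and lattices $\{x_j\} _{j\in J}$, $\{\xi _k\} _{k\in J}$ so that the Gabor systems generated by $\psi$ and $\psi _2$ are dual frames. For any $f\in M^p_{(\omega )}(\rr d)$, the discrete quasi-norm characterization \eqref{Eq:DiscreteQuasiNorm} (with $p=q$) and the reconstruction formula preceding it give
$$
f=\sum _{j,k\in J}c_{j,k}e^{i\scal \cdo {\xi _k}}\psi (\cdo -x_j),\qquad c_{j,k}=V_{\psi _2}f(x_j,\xi _k),
$$
with $\sum _{j,k}|c_{j,k}\omega (x_j,\xi _k)|^p\asymp \nm f{M^p_{(\omega )}}^p$. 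Enumerating $J\times J$ as $\mathbf N$ produces a legitimate representative in the sense of Definition \ref{Def:CalMpDef}, whence
$$
\nm f{\maclM ^p_{(\omega )}}^p\le \sum _{j,k\in J}|c_{j,k}\omega (x_j,\xi _k)|^p\lesssim \nm f{M^p_{(\omega )}}^p.
$$

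The two estimates together yield the claimed equivalence of quasi-norms. I do not expect a serious obstacle; the main delicate step is the first inclusion, where one must carefully exploit $p\le 1$ together with $v$-moderateness to redistribute the weight from $(x,\xi )$ onto the coefficient points $(x_n,\xi _n)$. The second inclusion is essentially a re-indexing application of the discrete characterization from \cite[Theorem 3.7]{Toft12}.
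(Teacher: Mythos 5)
Your proof is correct and follows essentially the same route as the paper: the inclusion $M^p_{(\omega)}\subseteq \maclM^p_{(\omega)}$ is exactly the discrete Gabor characterization from \cite[Theorem 3.7]{Toft12}, and the opposite inclusion combines the $p$-triangle inequality for $p\le 1$, $v$-moderateness to shift the weight from $(x,\xi)$ onto $(x_n,\xi_n)$, and translation invariance of $\nm{\psi}{M^p_{(v)}}$. The only cosmetic difference is that you integrate the continuous $L^p$ quasi-norm of $V_\phi f$ directly, whereas the paper first reduces to finite expansions by density and then works with an equivalent discrete lattice quasi-norm; both are legitimate.
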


\par

For the proof we need the following lemma.

\par

\begin{lemma}\label{Lemma:TransMod}
Let $p\in (0,1]$, $v\in \mascP _E(\rr {2d})$ be submultiplicative,
$\omega \in \mascP _E(\rr {2d})$ be
$v$-moderate, $\psi \in M^p_{(\omega )}(\rr d)$, and set
$$
\psi _X= e^{i\scal \cdo \xi}\psi (\cdo -x)
\quad \text{and}\quad
\omega _X(y,\eta ) = \omega (y-x,\eta -\xi )
$$
when $X=(x,\xi )\in \rr {2d}$. Then $\omega _X$ is $v$-moderate
for every $X\in \rr {2d}$, and $\nm {\psi _X}{M^p_{(\omega _X)}}$ is independent
of $X\in \rr {2d}$, when the window function of the modulation space
norm is fixed.
\end{lemma}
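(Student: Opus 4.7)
The plan is to verify the two claims of the lemma by direct computation, exploiting the well-known covariance of the short-time Fourier transform under time-frequency shifts, together with a change of variables in the integral defining the quasi-norm.

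For the first assertion, moderateness of $\omega _X$ is immediate. Since $\omega$ is $v$-moderate, we have $\omega (a+b)\lesssim \omega (a)v(b)$ for all $a,b\in \rr {2d}$. Substituting $a=(y-x,\eta -\xi )$ and $b=(y',\eta ')$ gives
$$
\omega _X(y+y',\eta +\eta ') = \omega (y+y'-x,\eta +\eta '-\xi ) \lesssim \omega (y-x,\eta -\xi )v(y',\eta ') = \omega _X(y,\eta )v(y',\eta '),
$$
with the same implicit constant as for $\omega$. So $\omega _X$ is $v$-moderate uniformly in $X$.

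For the second assertion the key step is to compute how $V_\phi \psi _X$ relates to $V_\phi \psi$. Starting from $(\ref{defstft})'$, pulling out the modulation factor $e^{i\scal z\xi}$ and performing the substitution $w=z-x$ yields
$$
V_\phi \psi _X(y,\eta ) = e^{-i\scal x{\eta -\xi}}V_\phi \psi (y-x,\eta -\xi ),
$$
so in particular $|V_\phi \psi _X(y,\eta )| = |V_\phi \psi (y-x,\eta -\xi )|$. Inserting this into the $M^p_{(\omega _X)}$-quasi-norm and applying the translation $(y,\eta )\mapsto (y-x,\eta -\xi )$, whose Jacobian is $1$, the shifts in $|V_\phi \psi _X|$ and in $\omega _X$ cancel exactly and one obtains $\nm {\psi _X}{M^p_{(\omega _X)}} = \nm {\psi}{M^p_{(\omega )}}$, which is manifestly independent of $X$.

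No step poses a serious obstacle here: the content of the lemma is essentially a bookkeeping observation that if one shifts the function and the weight in tandem by the same time-frequency translation, the resulting modulation space quasi-norm is preserved. The only thing to watch is that the window $\phi$ is kept fixed throughout, which is guaranteed by Proposition \ref{p1.4B} since different admissible windows yield equivalent quasi-norms.
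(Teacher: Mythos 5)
Your proof is correct and follows essentially the same route as the paper: the paper's argument is precisely the identity $|V_{\phi}\psi _X\, \omega _X|=|V_{\phi}\psi (\cdo -X)\, \omega (\cdo -X)|$ followed by applying the $L^p$ quasi-norm, which is what you obtain by writing out the STFT covariance and changing variables. Your verification of the $v$-moderateness of $\omega _X$ likewise matches the (omitted) straightforward computation the paper alludes to.
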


\par

\begin{proof}
The fact that  $\omega _X$ is $v$-moderate is a straight-forward consequence of the
fact that $v$ is submultiplicative. The details are left for the reader.

\par

We have
$$
|V_{\phi}\psi _X \omega _X|=|V_{\phi}\psi (\cdo -X) \omega (\cdo -X)|,
$$
and the $X$-independency of $\nm {\psi _X}{M^p_{(\omega _X)}}$
follows by applying the $L^p$ quasi-norm on the last equality.
\end{proof}

\par

\begin{proof}[Proof of Proposition \ref{Prop:MCalMIdent}]
By \cite[Theorem 3.7]{Toft12}, it follows that $M^p_{(\omega )}$ is continuously embedded in
$\maclM ^p_{(\omega )}$.

\par

We need to prove the opposite embedding. Let $\maclM _0(\rr d)$
be the set of all expansions in \eqref{Eq:fGaborExp} such that at most finite
numbers of $a_n$ are non-zero. By straight-forward arguments of approximations
it follows that $\maclM_0(\rr d)$ is contained and dense in both $M^p_{(\omega )}(\rr d)$
and $\maclM ^p_{(\omega )}(\rr d)$. The result therefore follows if we prove
\begin{equation}\label{Eq:RevIneqMpCalMp}
\nm f{M^p_{(\omega )}} \lesssim \nm f{\maclM ^p_{(\omega )}}
\end{equation}
when $f\in \maclM _0(\rr d)$.

\par

Assume that $f\in \maclM _0(\rr d)$, let $\ep >0$ and let $\phi$ be Gaussian,
and choose a representation \eqref{Eq:fGaborExp} such that
$$
\left ( \sum _{n=0}^\infty |a_n\omega (x_n,\xi _n)|^p
\right )^{\frac 1p} \le \nm f{\maclM ^p_{(\omega )}} +\ep .
$$
By straight-forward arguments of approximations, we may assume that $a_n$ are
non-zero only for finite numbers of $n$.
Also let $\psi _n =\psi _{X_n}$ and $\omega _n = \omega _{X_n}$, $X_n=(x_n,\xi _n)$,
where $\psi _X$ and $\omega _X$ are the same as in Lemma \ref{Lemma:TransMod}.
Then there is a lattice $\Lambda =\Lambda _1\times \Lambda _2$, where
$\Lambda _1,\Lambda _2\subseteq \rr d$, and such that
\begin{multline*}
\nm f{M^p_{(\omega )}}
\asymp
\left ( \sum _{(j,\iota )\in \Lambda}\left | \sum _{n=1}^\infty
a_nV_{\phi}(e^{i\scal \cdo {\xi _n}} \psi (\cdo -x_n))(j,\iota )
\omega (j,\iota )\right |^p \right )^{\frac 1p}
\\[1ex]
\le
\left ( \sum _{(j,\iota )\in \Lambda}\sum _{n=1}^\infty \left |
a_nV_{\phi}(e^{i\scal \cdo {\xi _n}} \psi (\cdo -x_n))(j,\iota )
\omega (j,\iota )\right |^p \right )^{\frac 1p}
\\[1ex]
\le
\left ( \sum _{n=1}^\infty |a_n\omega (x_n,\xi _n)|^p
\left ( \sum _{(j,\iota )\in \Lambda}|V_\phi \psi _n(j,\iota )v_n(j,\iota ) |^p
\right ) \right )^{\frac 1p}
\\[1ex]
\asymp
\left ( \sum _{n=1}^\infty |a_n\omega (x_n,\xi _n)|^p
\nm{\psi _n}{M^p_{(v_n)}}^p
\right )^{\frac 1p}
\\[1ex]
=
\nm{\psi }{M^p_{(v)}}
\left ( \sum _{n=1}^\infty |a_n\omega (x_n,\xi _n)|^p
\right )^{\frac 1p}
\le
\nm{\psi }{M^p_{(v)}} \big ( \nm f{\maclM ^p_{(\omega )}} +\ep  \big ).
\end{multline*}

\par

Since $\ep$ is chosen arbitrarily, \eqref{Eq:RevIneqMpCalMp} follows.
\end{proof}

\par

By the previous proposition it follows that $\maclM ^p_{\psi ,(\omega)}(\rr d)$
is independent of the choice of $\psi$, which justifies the usage of the notation
$\maclM ^p_{(\omega )}$ instead of $\maclM ^p_{\psi ,(\omega )}$ above.

\par

We are now prepared to formulate and prove the extension of Feichtinger's minimization
property in \cite{Fe1} to the case of quasi-Banach spaces.

\par

\begin{thm}\label{Thm:Minimal}
Let $\omega ,v\in \mascP _E(\rr {2d})$ be such that $\omega$ is $v$-moderate,
$p\in (0,1]$, $\mascB \subseteq \Sigma _1'(\rr d)$ be a quasi-Banach space such
that the following is true:
\begin{enumerate}
\item the quasi-norm $\nm \cdo{\mascB}$ of $\mascB$
satisfies \eqref{Eq:pTriangleIneq};

\vrum

\item $\mascB$ is invariant under time-frequency shifts
$f\mapsto e^{i\scal \cdo \xi}f(\cdo -x)$, and
$$
\nm {e^{i\scal \cdo \xi}f(\cdo -x)}{\mascB} \lesssim \omega (x,\xi )\nm f{\mascB} \text ;
$$

\vrum

\item $\mascB$ contains a Gabor atom of order $p$ with respect to $v$.
\end{enumerate}

\par

Then $M^p_{(\omega )}(\rr d)$ is continuously embedded in $\mascB$.
\end{thm}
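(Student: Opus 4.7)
The plan is to reduce the problem to the identification $M^p_{(\omega)}(\rr d) = \maclM^p_{(\omega)}(\rr d)$ furnished by Proposition \ref{Prop:MCalMIdent}, and then bound the $\mascB$-quasi-norm of a finite Gabor sum directly from hypotheses (1) and (2). Fix the Gabor atom $\psi$ of order $p$ with respect to $v$ that is guaranteed by hypothesis (3); in particular $\psi \in \mascB \cap M^p_{(v)}(\rr d)\setminus 0$, so Proposition \ref{Prop:MCalMIdent} applies and it suffices to prove a continuous embedding $\maclM^p_{\psi,(\omega)}(\rr d) \hookrightarrow \mascB$.

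Next I would do the main estimate on the dense subspace $\maclM_0(\rr d)$ of finite Gabor sums, which was already identified in the proof of Proposition \ref{Prop:MCalMIdent} as being dense in both $\maclM^p_{(\omega)}(\rr d)$ and $M^p_{(\omega)}(\rr d)$. For $f \in \maclM_0(\rr d)$ and any representative $f = \sum_{n=0}^N a_n e^{i\scal \cdo {\xi_n}}\psi(\cdo - x_n)$, the $p$-triangle inequality \eqref{Eq:pTriangleIneq} in $\mascB$ combined with hypothesis (2) yields
\begin{equation*}
\nm{f}{\mascB}^p \;\le\; \sum_{n=0}^N |a_n|^p \nm{e^{i\scal \cdo {\xi_n}}\psi(\cdo - x_n)}{\mascB}^p
\;\lesssim\; \nm{\psi}{\mascB}^p \sum_{n=0}^N |a_n \omega(x_n,\xi_n)|^p.
\end{equation*}
Taking the infimum over all finite representatives of $f$ gives the bound $\nm{f}{\mascB} \lesssim \nm{\psi}{\mascB}\,\nm{f}{\maclM^p_{(\omega)}}$ on $\maclM_0(\rr d)$.

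It remains to extend this inequality from $\maclM_0(\rr d)$ to all of $\maclM^p_{(\omega)}(\rr d)$. Given $f \in \maclM^p_{(\omega)}(\rr d)$, density provides a sequence $\{f_k\} \subseteq \maclM_0(\rr d)$ with $f_k \to f$ in $\maclM^p_{(\omega)}(\rr d)$. By the estimate just established, $\{f_k\}$ is also Cauchy in $\mascB$, and hence converges to some $g \in \mascB$ by completeness of $\mascB$. The continuous embeddings $\mascB \hookrightarrow \Sigma_1'(\rr d)$ and $M^p_{(\omega)}(\rr d) \hookrightarrow \Sigma_1'(\rr d)$ imply that both $f_k \to g$ and $f_k \to f$ in $\Sigma_1'(\rr d)$, so $f = g \in \mascB$, with the desired quasi-norm bound inherited in the limit.

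The only subtle point — and the one I would expect to need the most care — is the identification of the $\mascB$-limit with $f$ as a Gelfand-Shilov distribution. This identification uses in an essential way that $\mascB$ is continuously embedded in $\Sigma_1'(\rr d)$; without such an embedding the abstract completion would not a priori sit inside a common ambient space with $M^p_{(\omega)}(\rr d)$. Everything else is a direct consequence of the two-sided scaffolding provided by Proposition \ref{Prop:MCalMIdent} and by the $p$-triangle/time-frequency-invariance hypotheses (1) and (2) on $\mascB$.
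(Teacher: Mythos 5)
Your proposal is correct and follows essentially the same route as the paper: reduce to the identification $M^p_{(\omega)}=\maclM^p_{(\omega)}$ from Proposition \ref{Prop:MCalMIdent}, apply the $p$-triangle inequality and time-frequency invariance to finite Gabor sums in $\maclM_0$, and conclude by density. Your explicit Cauchy-sequence argument for passing from $\maclM_0$ to all of $M^p_{(\omega)}$ (using completeness of $\mascB$ and the embedding into $\Sigma_1'$) merely spells out the limiting step that the paper compresses into ``we may assume $\maclM_0$ is dense in $\mascB$'' and ``the result follows.''
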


\par

If $p=1$ and $\omega =v$, then Theorem \ref{Thm:Minimal} (3) is equivalent to
$\mascB \bigcap M^1_{(v)}(\rr d) \neq \{ 0\}$ in view of \cite[Theorem S]{Gc1}. Hence,
Theorem \ref{Thm:Minimal} extends the Feichtinger's minimizing property.

\par

After the previous preparations, the proof is essentially the same as in the
Banach space case, $p=1$. In order to be self-contained we here present the arguments.

\par

\begin{proof}
We may assume that $\maclM _0(\rr d)$ is dense in $\mascB$, where 
$\maclM _0$ is the same as in the proof of Proposition \ref{Prop:MCalMIdent}.
By the assumptions, there is a Gabor atom $\psi$ of order $p$ with respect to
$v$. By
Proposition \ref{Prop:MCalMIdent}, $M^p_{(\omega )}(\rr d)$ consists of all
$f$ in \eqref{Eq:fGaborExp} which satisfies \eqref{Eq:CalMpNormDef}.

\par

Let $f\in \maclM _0(\rr d)$. Then
\begin{multline*}
\nm f{\mascB} ^p\le \left \Vert \sum _{n=1}^\infty a_n e^{i\scal \cdo {\xi _n}}
\psi (\cdo -x_n) \right \Vert _{\mascB}^p
\\[1ex]
\le
\sum _{n=1}^\infty |a_n|^p \nm {e^{i\scal \cdo {\xi _n}} \psi (\cdo -x_n)}{\mascB}^p
\lesssim
\sum _{n=1}^\infty |a_n\omega (x_n,\xi _n)|^p.
\end{multline*}
By taking the infimum of the right-hand side, we obtain
$$
\nm f{\mascB}\lesssim \nm f{M^p_{(\omega )}},
$$
and the result follows.
\end{proof}

\par

We also have corresponding maximality property of Theorem \ref{Thm:Minimal}
of translation and modulation invariant spaces.

\par

\begin{thm}\label{Thm:Maximality}
Let $v\in \mascP _E(\rr {2d})$ be submultiplicative, $\mascB
\subseteq \Sigma _1'(\rr d)$ be a Banach space such that the
following is true:
\begin{enumerate}
\item $\mascB$ is invariant under time-frequency shifts,
$f\mapsto e^{i\scal \cdo \xi}f(\cdo -x)$, and
$$
\nm {e^{i\scal \cdo \xi}f(\cdo -x)}{\mascB} \lesssim v(x,\xi )\nm f{\mascB} \text ;
$$

\vrum

\item $\mascB$ contains a Gabor atom of order $1$.
\end{enumerate}

\par

Then $\mascB$ is continuously embedded in $M^\infty _{(1/v)}(\rr d)$.
\end{thm}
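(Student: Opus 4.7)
The plan is to bound $|V_\phi f(x,\xi )|$ directly by $v(x,\xi )\nm f{\mascB}$ via a change of variable in the STFT combined with the time-frequency invariance of $\mascB$. Fix any $\phi \in \Sigma _1(\rr d)\setminus 0$; since the submultiplicative weight $v$ has at most exponential growth by \eqref{WeightExpEst}, one has $\Sigma _1(\rr d)\subseteq M^1_{(v)}(\rr d)$, so $\phi$ is admissible as a window defining the quasi-norm of $M^\infty _{(1/v)}(\rr d)$ by Proposition \ref{p1.4B}. The continuous inclusion $\mascB \hookrightarrow \Sigma _1'(\rr d)$ then guarantees that the $\Sigma _1$--$\Sigma _1'$ pairing against $\phi$ is a bounded linear functional on $\mascB$: there is $C_\phi >0$ such that $|(g,\phi )|\le C_\phi \nm g{\mascB}$ for all $g\in \mascB$.

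A direct change of variable $y=z+x$ in the integral form \eqref{defstft}$'$ of $V_\phi f$ yields, for $f\in \mascB$ and $(x,\xi )\in \rr {2d}$,
\begin{equation*}
V_\phi f(x,\xi ) = e^{-i\scal x\xi} V_\phi (M_{-\xi} T_{-x} f)(0,0),
\end{equation*}
so $|V_\phi f(x,\xi )|$ is a constant multiple of $|(M_{-\xi} T_{-x} f,\phi )|$, where $M_\xi T_x$ abbreviates the time-frequency shift $g\mapsto e^{i\scal \cdo \xi}g(\cdo -x)$. Applying the functional bound above with $g=M_{-\xi}T_{-x}f$, and then invoking hypothesis (1) together with the evenness of the submultiplicative $v$, we obtain
\begin{equation*}
|V_\phi f(x,\xi )| \lesssim \nm {M_{-\xi} T_{-x} f}{\mascB}
\lesssim v(-x,-\xi )\nm f{\mascB} = v(x,\xi )\nm f{\mascB} .
\end{equation*}
Dividing by $v(x,\xi )$ and taking the supremum over $(x,\xi )\in \rr {2d}$ gives $\nm f{M^\infty _{(1/v)}}\lesssim \nm f{\mascB}$, which is the claimed continuous embedding.

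The Gabor atom assumption (2) is used only to ensure nontriviality of $\mascB$ and to mirror the symmetric statement of Theorem \ref{Thm:Minimal}; the argument itself does not require any Gabor expansion of $f$. The only nontrivial ingredient is the passage from the set-theoretic inclusion $\mascB \subseteq \Sigma _1'(\rr d)$ to the pointwise estimate $|(g,\phi )|\le C_\phi \nm g{\mascB}$; this is automatic once the inclusion is interpreted (as is standard) as a continuous embedding, since pairing against any fixed $\phi \in \Sigma _1(\rr d)$ is one of the defining continuous seminorms of $\Sigma _1'(\rr d)$.
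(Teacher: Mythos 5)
Your proof is correct, but it takes a genuinely different route from the paper. The paper argues by duality: it first invokes Theorem \ref{Thm:Minimal} to get $\Sigma _1(\rr d)\subseteq \mascB$, constructs the $L^2$-dual $\mascB '$ as the completion of $\Sigma _1(\rr d)$ under the norm $\phi \mapsto \sup _{\nm f{\mascB}\le 1}|(f,\phi )_{L^2}|$, checks that $\mascB '$ is again time-frequency shift invariant with weight $v$, applies the minimality theorem a second time to conclude $M^1_{(v)}(\rr d)\subseteq \mascB '$, and finally uses the duality $\nm f{M^\infty _{(1/v)}}\asymp \sup _{\nm \phi{M^1_{(v)}}\le 1}|(f,\phi )_{L^2}|$. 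You instead estimate $V_\phi f$ directly via the covariance identity $V_\phi f(x,\xi )=e^{-i\scal x\xi}(M_{-\xi}T_{-x}f,\phi )$, the boundedness of $g\mapsto (g,\phi )$ on $\mascB$ coming from the continuous embedding $\mascB \hookrightarrow \Sigma _1'(\rr d)$, and hypothesis (1) together with the evenness of $v$; all steps check out, including the phase bookkeeping and the observation that $M_{-\xi}T_{-x}$ is itself one of the admitted shifts. Your approach is shorter and more elementary: it bypasses Theorem \ref{Thm:Minimal}, the construction of $\mascB '$, and the $(M^1_{(v)})'=M^\infty _{(1/v)}$ duality, and — as you note — it does not use hypothesis (2) at all, so it actually proves a marginally stronger statement. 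What the paper's argument buys is the conceptual point of the section, namely that maximality of $M^\infty _{(1/v)}$ is literally dual to minimality of $M^1_{(v)}$, reusing the machinery already built; both proofs rely equally on reading the inclusion $\mascB \subseteq \Sigma _1'(\rr d)$ as a continuous embedding (the paper needs it for finiteness of the dual norm $\nm \phi{\mascB '}$), and you are right to flag that this is the standing convention.
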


\par

We need some preparations for the proof. We note that
$\Sigma _1(\rr d)\subseteq \mascB$ in view of Theorem
\ref{Thm:Minimal}. Let $\nm \phi{\mascB '}$ be the dual norm
of $\phi \in \Sigma _1(\rr d)$
with respect to the $L^2$ form be defined by
$$
\nm \phi {\mascB '}\equiv \sup |(f,\phi )_{L^2}|,
$$
where the supremum is taken over all $f\in \mascB$ such that
$\nm f{\mascB}\le 1$, and let the $L^2$-dual $\mascB '$
of $\mascB$ be the completion of $\Sigma _1(\rr d)$ under this
norm (cf. \cite{Toft11}).

\par

\begin{proof}[Proof of Theorem \ref{Thm:Maximality}]
Let $\mascB '$ be the $L^2$ dual of $\mascB$, and let $\Omega$
be the set of all $f\in \mascB$ such that $\nm f{\mascB}\le 1$.
Then $\mascB '$ contains at least one element in $\Sigma _1$, and
if $\phi \in \Sigma _1(\rr d)$, we get
\begin{multline*}
\nm {\phi (\cdo -x)e^{i\scal \cdo \xi}}{\mascB '}
\equiv
\sup _{f\in \Omega} \left (| (f,\phi (\cdo -x)e^{i\scal \cdo \xi })| \right )
\\[1ex]
=
\sup _{f\in \Omega} \left (| (f(\cdo +x)e^{-i\scal \cdo \xi } ,\phi )| \right )
\le
\sup _{f\in \Omega} \left (\nm {f(\cdo +x)e^{-i\scal \cdo \xi }}{\mascB}\nm \phi {\mascB '}
\right )
\\[1ex]
\lesssim
\nm \phi {\mascB '}v(-x,-\xi )
=
\nm \phi {\mascB '}v(x,\xi ).
\end{multline*}
Hence $\mascB '$ is translation and modulation invariant.

\par

By Theorem \ref{Thm:Minimal} it follows that
$$
\Sigma _1(\rr d)\subseteq M^1_{(v)}(\rr d) \subseteq \mascB '.
$$
This gives
$$
\nm f{M^\infty _{(1/v)}} \asymp \sup _{\nm \phi {M^1_{(v)}} \le 1} |(f,\phi )_{L^2}|
\lesssim
\sup _{\nm \phi {\mascB '} \le 1} |(f,\phi )_{L^2}|
\le
\nm f{\mascB},
$$
and the result follows.
\end{proof}

\par

We also have the following characterization of certain modulation spaces. Here let
$M^p_{0,(v)}(\rr {d_1+d_2})$ be the set of all
\begin{equation}\label{Eq:FTensorSum}
F=\sum _{j=1}^\infty f_{1,j}\otimes f_{2,j}
\end{equation}
such that
\begin{equation}\label{Eq:FTensorSumEst}
\sum _{j=1}^\infty
\nm {f_{1,j}}{M^p_{(v_1)}}^p
\nm {f_{2,j}}{M^p_{(v_2)}}^p <\infty ,
\end{equation}
where 
\begin{equation}\label{Eq:TensorvDef}
v(x_1,x_2,\xi _1,\xi _2)=v_1(x_1,\xi _1)v_2(x_2,\xi _2),
\end{equation}
and $v_k\in \mascP _E(\rr{2d_k})$ are submultiplicative, $k=1,2$.
We equip $M^p_{0,(v)}(\rr {d_1+d_2})$ with the norm
$$
F\mapsto \nm F{M^p_{0,(v)}}
\equiv
\inf \left (
\sum _{j=1}^\infty
\nm {f_{1,j}}{M^p_{(v_1)}}^p
\nm {f_{2,j}}{M^p_{(v_2)}}^p
\right )^{\frac 1p},
$$
where the infimum is taken over all representatives $\{ f_{k,j} \}_{j=1}^\infty
\subseteq M^p_{(v_k)}(\rr {d_k})$, $k=1,2$ and such that \eqref{Eq:FTensorSum}
and \eqref{Eq:FTensorSumEst} hold.

\par

\begin{prop}\label{Prop:ModEquivConseq}
Let $v_k\in \mascP _E(\rr{2d_k})$ be submultiplicative, $k=1,2$, let $p\in (0,1]$,
and let $v$ be given by \eqref{Eq:TensorvDef}. Then $M^p_{0,(v)}(\rr {d_1+d_2})
=M^p_{(v)}(\rr {d_1+d_2})$ with equivalent quasi-norms.
\end{prop}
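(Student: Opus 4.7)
The plan is to establish the two continuous embeddings separately. The easier direction, $M^p_{0,(v)}(\rr {d_1+d_2})\hookrightarrow M^p_{(v)}(\rr {d_1+d_2})$, is based on the observation that the short-time Fourier transform respects tensor products. Namely, choose window functions $\phi_k\in \Sigma_1(\rr{d_k})\setminus 0$ and let $\phi=\phi_1\otimes \phi_2$, which is an admissible window for $M^p_{(v)}(\rr{d_1+d_2})$ by Proposition \ref{p1.4B}. Since
$$
V_\phi(f_1\otimes f_2)(x_1,x_2,\xi_1,\xi_2)
=
(V_{\phi_1}f_1)(x_1,\xi_1)\,(V_{\phi_2}f_2)(x_2,\xi_2),
$$
and $v$ factors as in \eqref{Eq:TensorvDef}, the $L^p$ quasi-norm factors as well, giving
$\nm{f_1\otimes f_2}{M^p_{(v)}}=\nm{f_1}{M^p_{(v_1)}}\nm{f_2}{M^p_{(v_2)}}$.
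Combining this with the $p$-triangle inequality for $\nm \cdot {M^p_{(v)}}$ applied to \eqref{Eq:FTensorSum} and taking the infimum over representatives yields $\nm F{M^p_{(v)}}\le \nm F{M^p_{0,(v)}}$.

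For the reverse embedding I would exploit the tensor structure of Gabor atoms. By Remarks \ref{Rem:GaborAtomOrderOne} and \ref{Rem:GaborAtomOrderLessThanOne}, any nonzero element of $\Sigma_1(\rr{d_k})$ is a Gabor atom of order $p$ with respect to $v_k$; choose such $\psi_k$, so that $\Psi=\psi_1\otimes\psi_2\in \Sigma_1(\rr{d_1+d_2})\setminus 0$ is a Gabor atom of order $p$ with respect to $v$. By Proposition \ref{Prop:MCalMIdent}, the space $M^p_{(v)}(\rr{d_1+d_2})$ coincides with $\maclM^p_{\Psi,(v)}$, so for any $F\in M^p_{(v)}$ and any $\ep>0$ one can pick a representation
$$
F=\sum_{n=0}^\infty a_n e^{i\scal{\cdot}{(\xi_{1,n},\xi_{2,n})}}\Psi(\cdot-(x_{1,n},x_{2,n}))
$$
with $\bigl(\sum_n |a_nv(X_n,\Xi_n)|^p\bigr)^{1/p}\le \nm F{M^p_{(v)}}+\ep$, where I set $X_n=(x_{1,n},x_{2,n})$ and $\Xi_n=(\xi_{1,n},\xi_{2,n})$.

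The key observation is that each summand factors as
$f_{1,n}\otimes f_{2,n}$ with
$f_{1,n}=a_n e^{i\scal{\cdot}{\xi_{1,n}}}\psi_1(\cdot -x_{1,n})$ and $f_{2,n}=e^{i\scal{\cdot}{\xi_{2,n}}}\psi_2(\cdot -x_{2,n})$. Using submultiplicativity of $v_k$ (which plays the role of both the weight and the moderating weight in Lemma \ref{Lemma:TransMod}), the standard estimate on time-frequency shifts gives
$$
\nm{f_{1,n}}{M^p_{(v_1)}}\lesssim |a_n|\,v_1(x_{1,n},\xi_{1,n})\,\nm{\psi_1}{M^p_{(v_1)}},\quad
\nm{f_{2,n}}{M^p_{(v_2)}}\lesssim v_2(x_{2,n},\xi_{2,n})\,\nm{\psi_2}{M^p_{(v_2)}}.
$$
Multiplying, raising to the $p$-th power, summing in $n$ and using \eqref{Eq:TensorvDef} yields
$$
\sum_{n=0}^\infty \nm{f_{1,n}}{M^p_{(v_1)}}^p\nm{f_{2,n}}{M^p_{(v_2)}}^p
\lesssim
\nm{\psi_1}{M^p_{(v_1)}}^p\nm{\psi_2}{M^p_{(v_2)}}^p\sum_n |a_n v(X_n,\Xi_n)|^p.
$$
Letting $\ep\to 0$ gives $\nm F{M^p_{0,(v)}}\lesssim \nm F{M^p_{(v)}}$, completing the equivalence.

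The main obstacle is the reverse embedding, but Proposition \ref{Prop:MCalMIdent} essentially does the work: once one knows that every $F\in M^p_{(v)}$ admits an (almost norm-realizing) non-uniform Gabor expansion with respect to a \emph{tensor product} atom $\psi_1\otimes\psi_2$, the tensor decomposition of $F$ is read off term by term and the estimates reduce to the standard time-frequency shift bound. The only mild subtlety is ensuring that a tensor product atom may be chosen as a Gabor atom of order $p$ with respect to $v$, which is guaranteed by Remark \ref{Rem:GaborAtomOrderLessThanOne} applied in the factors.
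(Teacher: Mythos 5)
Your proof is correct, and the first half (the embedding $M^p_{0,(v)}\subseteq M^p_{(v)}$ via the tensor factorization of the short-time Fourier transform and the $p$-triangle inequality) is exactly the paper's argument. For the reverse embedding the underlying mechanism is the same — everything rests on expanding $F$ in time-frequency shifts of a tensor-product atom $\psi_1\otimes\psi_2\in\Sigma_1\setminus 0$ — but you package it differently. The paper verifies the hypotheses of the abstract minimality theorem (Theorem \ref{Thm:Minimal}) for $\mascB=M^p_{0,(v)}$: the $p$-triangle inequality, time-frequency shift invariance with weight $v$, the presence of a Gabor atom, and (somewhat tersely, by reference to the proof of Proposition \ref{CalMpComplete}) completeness of $M^p_{0,(v)}$. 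You instead apply Proposition \ref{Prop:MCalMIdent} directly to $M^p_{(v)}$ with the tensor atom $\Psi=\psi_1\otimes\psi_2$ and read off an explicit rank-one tensor decomposition term by term, estimating each factor by the standard moderate-weight bound for time-frequency shifts. The two routes are essentially equivalent in content — the shift invariance of $M^p_{0,(v)}$ that the paper checks is precisely the fact that a time-frequency shift of a tensor product is a tensor product of shifts, which is also your key observation — but your version has the small advantage of not requiring completeness of $M^p_{0,(v)}$ as an input (it comes out as a corollary of the norm equivalence), at the cost of redoing by hand the bookkeeping that Theorem \ref{Thm:Minimal} encapsulates.
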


\par

We remark that proofs of Proposition \ref{Prop:ModEquivConseq} in the special case
$p=1$ have been demonstrated by H. Feichtinger in different occasions. The proof
of the general case here below is based on arguments used by H. Feichtinger in his proofs of
the case $p=1$.

\par

\begin{proof}
Since $M^p_{(v)}(\rr {d_1+d_2})$ is a quasi-Banach space of order $p$, we get
\begin{equation*}
\NM {\sum _{j=1}^\infty f_{1,j}\otimes f_{2,j}}{M^p_{(v)}}^p
\le
\sum _{j=1}^\infty \nm {f_{1,j}\otimes f_{2,j}}{M^p_{(v)}}^p
=
\sum _{j=1}^\infty
\nm {f_{1,j}}{M^p_{(v_1)}}^p
\nm {f_{2,j}}{M^p_{(v_2)}}^p,
\end{equation*}
giving that $M^p_{0,(v)}(\rr {d_1+d_2})
\subseteq M^p_{(v)}(\rr {d_1+d_2})$.

\par

On the other hand, it is clear that $M^p_{0,(v)}(\rr {d_1+d_2})$
contains a Gabor atom of order $p$ with respect to $v$, and that
$$
\nm {e^{i\scal \cdo \xi}F(\cdo -x)}{M^p_{0,(v)}}\lesssim v(x,\xi )\nm F{M^p_{0,(v)}},
\qquad x,\xi \in \rr {d_1+d_2}
$$
when $F\in M^p_{0,(v)}(\rr {d_1+d_2})$. Furthermore, in view of the proof of
Proposition \ref{CalMpComplete} it follows that
$M^p_{0,(v)}(\rr {d_1+d_2})$ is complete, and thereby is a quasi-Banach space
of order $p$. 

\par

By Theorem \ref{Thm:Minimal} it now follows that $M^p_{(v)}(\rr {d_1+d_2})$
is continuously embedded in $M^p_{0,(v)}(\rr {d_1+d_2})$. Hence
$M^p_{0,(v)}(\rr {d_1+d_2})= M^p_{(v)}(\rr {d_1+d_2})$, and the result follows.
\end{proof}

\par

\subsection{Minimality of $\mathbb U^p(\omega ,\boldsymbol J)$}

\par

The following result is the matrix version of
Theorem \ref{Thm:Minimal}.

\par

\begin{prop}\label{Prop:MatrixMinimal}
Let $p\in (0,1]$, $J_1$ and $J_2$ be index sets, $\boldsymbol J=J_2\times J_1$,
$\omega$ be a positive function on $\boldsymbol J$, $\mascB \subseteq \mathbb
U_0'(\boldsymbol J)$
be a quasi-Banach space such that the following conditions hold true:
\begin{enumerate}
\item $\nm {A_1+A_2}{\mascB}^p \le
\nm {A_1}{\mascB}^p+\nm {A_2}{\mascB}^p$
when $A_1,A_2\in \mascB$;

\vrum

\item $A_{k_{2},k_{1}}\equiv (\delta _{j_1,k_{1}}\delta _{j_2,k_{2}})_{(j_2,j_1)\in \boldsymbol J}$
belongs to $\mascB$ for every $(k_{2},k_{1})\in \boldsymbol J$, and
$$
\nm {A_{k_2,k_1}}{\mascB}\le C\omega (k_2,k_1),
$$
for some constant $C>0$ which is independent of $(k_2,k_1)\in \boldsymbol J$.
\end{enumerate}
Then $\mathbb U^p(\omega ,\boldsymbol J)$ is continuously embedded in $\mascB$.
\end{prop}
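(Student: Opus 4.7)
The plan is to adapt the proof of Theorem \ref{Thm:Minimal} to the matrix setting, with the basis matrices $A_{k_2,k_1}$ playing the role of the time-frequency shifts of a Gabor atom. The discrete analogue of the Gabor expansion is the tautological decomposition
\[
A = \sum _{(j_2,j_1) \in \boldsymbol J} a(j_2,j_1)\, A_{j_2,j_1}
\]
valid for any $A = (a(j_2,j_1))_{(j_2,j_1) \in \boldsymbol J} \in \mathbb U_0'(\boldsymbol J)$. The key observation is that condition (1) (the $p$-triangle inequality in $\mascB$), iterated over finite subsums, combined with condition (2) (uniform control of $\nm {A_{k_2,k_1}}{\mascB}$ by $\omega$), immediately gives the desired $\mascB$-estimate in terms of the $\mathbb U^p(\omega ,\boldsymbol J)$-quasi-norm on the dense subspace $\mathbb U_0(\boldsymbol J)$.

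First I would take $A \in \mathbb U_0(\boldsymbol J)$ (finitely supported matrices) and estimate, using (1) iteratively together with the homogeneity of the quasi-norm, and then (2),
\[
\nm A{\mascB}^p
\le \sum _{(j_2,j_1)} |a(j_2,j_1)|^p\, \nm {A_{j_2,j_1}}{\mascB}^p
\le C^p \sum _{(j_2,j_1)} |a(j_2,j_1)\omega (j_2,j_1)|^p
= C^p \nm A{\mathbb U^p(\omega ,\boldsymbol J)}^p .
\]
Taking $p$-th roots this reads $\nm A{\mascB} \le C\nm A{\mathbb U^p(\omega ,\boldsymbol J)}$, i.\,e. the inclusion map is continuous on $\mathbb U_0(\boldsymbol J)$.

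Next, for a general $A \in \mathbb U^p(\omega ,\boldsymbol J)$, I would enumerate $\boldsymbol J$ and let $A_N$ denote the $N$-th partial sum of the expansion above. The estimate just derived, applied to differences $A_M - A_N$, shows that $\{A_N\}_{N \ge 1}$ is a Cauchy sequence in the quasi-Banach space $\mascB$, since the tail sums of $\sum _{(j_2,j_1)} |a(j_2,j_1)\omega (j_2,j_1)|^p$ tend to zero. By completeness of $\mascB$ the sequence converges to some $B \in \mascB$ with $\nm B{\mascB} \le C\nm A{\mathbb U^p(\omega ,\boldsymbol J)}$.

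The main obstacle will be identifying the limit $B$ with the original formal matrix $A$ inside the ambient space $\mathbb U_0'(\boldsymbol J)$. For this I would rely on the (implicit) continuity of the embedding $\mascB \hookrightarrow \mathbb U_0'(\boldsymbol J)$ in the topology of entrywise convergence on formal matrices: each entry of $A_N$ equals $a(j_2,j_1)$ as soon as $(j_2,j_1)$ is included in the enumeration, so the $(j_2,j_1)$-entry of the $\mascB$-limit $B$ must coincide with $a(j_2,j_1)$. Hence $B=A$ and the estimate $\nm A{\mascB} \le C\nm A{\mathbb U^p(\omega ,\boldsymbol J)}$ proves the claimed continuous embedding.
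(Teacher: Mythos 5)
Your proposal is correct and follows essentially the same route as the paper: decompose a finitely supported matrix as $\sum a(j_2,j_1)A_{j_2,j_1}$, iterate the $p$-triangle inequality, and invoke the uniform bound $\nm{A_{k_2,k_1}}{\mascB}\le C\omega(k_2,k_1)$ to get $\nm A{\mascB}\le C\nm A{\mathbb U^p(\omega,\boldsymbol J)}$ on $\mathbb U_0(\boldsymbol J)$, then pass to the closure by density and completeness. Your additional care in identifying the $\mascB$-limit of the partial sums with the formal matrix $A$ via entrywise convergence is a point the paper's proof leaves implicit, but it does not change the argument.
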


\par

\begin{proof}
Since $\mathbb U_0(\boldsymbol J)$ is dense in $\mathbb U^p(\omega ,\boldsymbol J)$ and
$\mathbb U_0(\boldsymbol J)\subseteq \mascB$ by the assumptions, it suffices to prove
$$
\nm A{\mascB}\le C\nm A{\mathbb U^p(\omega ,\boldsymbol J)},\quad A\in
\mathbb U_0(\boldsymbol J).
$$

\par

If $A\in \mathbb U_0(\boldsymbol J)$, then
\begin{multline*}
\nm A{\mascB}^p = \NM {\sum _{j_1,j_2}a(j_2,j_1)A_{j_2,j_1}}{\mascB}^p
\le
\sum _{j_1,j_2}|a(j_2,j_1)|^p\nm {A_{j_2,j_1}}{\mascB}^p
\\[1ex]
\le
C^p\sum _{j_1,j_2}|a(j_2,j_1)|^p\omega (j_2,j_1)^p = C^p\nm A{\mathbb
U^p(\omega ,\boldsymbol J)}^p,
\end{multline*}
and the result follows.
\end{proof}

\par

\section{Schatten-von Neumann properties for
operators with kernels in modulation spaces}
\label{sec3}

\par

In this section we use results from the previous section to
deduce Schatten-von Neumann properties of operators with kernels in
$M^p_{(\omega )}$. At the same time we deduce analogous properties for
pseudo-differential operators with symbols in $M^p_{(\omega )}$.

\par

More precisely, we have the following.

\par

\begin{thm}\label{Thm:QuasiBanachSchatten}
Let $A\in \GL (d,\mathbf R)$, $p,q,r\in (0,\infty ]$ be such that
\begin{equation}\label{Eq:pqrCond}
\frac 1r -1\ge \max \left ( \frac 1p-1,0 \right ) + \max
\left ( \frac 1q-1,0 \right ) +\frac 1q ,
\end{equation}
and let
$\omega _0\in \mascP (\rr {4d})$ and $\omega _1,\omega _2
\in \mascP _E(\rr {2d})$ be such that
\begin{equation}\label{Eq:omega120Cond}
\frac {\omega _2(x,\xi  )}{\omega _1
(y,\eta )} \lesssim \omega _0( x+A(y-x) ,\eta +A^*(\xi -\eta ),\xi -\eta ,y-x )
\end{equation}
holds true. Then
\begin{equation}\label{Eq:SchattenPseudo}
M^r_{(\omega _0)}(\rr {2d})\subseteq 
s_{A,q} (M^\infty _{(\omega _1)}(\rr d),M^p_{(\omega _2)}(\rr d) ).
\end{equation}
\end{thm}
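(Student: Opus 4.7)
The plan is to apply the minimality result Theorem \ref{Thm:Minimal} (with dimension $2d$ replacing $d$, and $\omega _0$ replacing $\omega$) to the quasi-Banach space of symbols
$$
\mascB \equiv s_{A,q}(M^\infty _{(\omega _1)}(\rr d),M^p_{(\omega _2)}(\rr d))\subseteq \Sigma _1'(\rr{2d}),
$$
equipped with the quasi-norm $a\mapsto \nm{\op_A(a)}{\mascI _q(M^\infty _{(\omega _1)},M^p_{(\omega _2)})}$. Once the three hypotheses of Theorem \ref{Thm:Minimal} are verified for this $\mascB$, the conclusion $M^r_{(\omega _0)}(\rr{2d})\subseteq \mascB$ is precisely \eqref{Eq:SchattenPseudo}.

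The $r$-triangle inequality on $\nm \cdot \mascB$ reduces, via the standard fact that $\mascI _q$ is a $\min(q,1)$-Banach space together with the elementary inequality $(a^s+b^s)^{r/s}\le a^r+b^r$ for $0<r\le s$, to the condition $r\le \min(q,1)$. This follows immediately from \eqref{Eq:pqrCond}, which yields $1/r\ge 1+1/q$, hence $r\le q/(q+1)\le \min(q,1)$. For the Gabor-atom hypothesis, I would pick $g_1,g_2\in \Sigma _1(\rr d)\setminus 0$ and set $\psi =W^A_{g_1,g_2}$. Since $\psi \in \Sigma _1(\rr{2d})$, Remark \ref{Rem:GaborAtomOrderLessThanOne} makes it a Gabor atom of order $r$ with respect to any submultiplicative $v$, and by \eqref{WeightExpEst} one can choose such $v$ against which $\omega _0$ is moderate. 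At the same time \eqref{trankone} shows that $\op_A(\psi)=(2\pi )^{-d/2}(\cdot ,g_2)g_1$ is rank-one, so that
$$
\nm{\op_A(\psi )}{\mascI _q(M^\infty _{(\omega _1)},M^p_{(\omega _2)})}\lesssim \nm{g_1}{M^p_{(\omega _2)}}\nm{g_2}{M^1_{(1/\omega _1)}}<\infty ,
$$
and therefore $\psi \in \mascB$.

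The decisive step is the time-frequency shift invariance
$$
\nm{e^{i\scal \cdot {Y_0}}a(\cdot -X_0)}{\mascB}\lesssim \omega _0(X_0,Y_0)\nm a\mascB ,\qquad X_0,Y_0\in \rr{2d}.
$$
After normalizing the quantization, if convenient, by the calculus transform \eqref{calculitransform} (whose effect on modulation spaces is controlled by Proposition \ref{Prop:ExpOpSTFT}), one derives a sandwich identity of the form
$$
\op_A\big (e^{i\scal \cdot {Y_0}}a(\cdot -X_0)\big )=c\cdot L_1(X_0,Y_0)\circ \op_A(a)\circ L_2(X_0,Y_0),
$$
where $L_1(X_0,Y_0):M^p_{(\omega _2)}\to M^p_{(\omega _2)}$ and $L_2(X_0,Y_0):M^\infty _{(\omega _1)}\to M^\infty _{(\omega _1)}$ are explicit time-frequency shifts with $A$-dependent phase factors, whose operator norms are controlled respectively by the $\omega _2$- and $1/\omega _1$-moderation constants associated with these shifts. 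Proposition \ref{Prop:NuclQBanach}(1) then yields
$$
\nm{\op_A(e^{i\scal \cdot {Y_0}}a(\cdot -X_0))}{\mascI _q}\lesssim \nm{L_1}{\maclB (M^p_{(\omega _2)})}\nm{L_2}{\maclB (M^\infty _{(\omega _1)})}\nm{\op_A(a)}{\mascI _q},
$$
and the role of condition \eqref{Eq:omega120Cond} is precisely to force this product to be dominated by $\omega _0(X_0,Y_0)$.

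The hardest part of the plan is this last piece: writing down the sandwich identity for a general $A\in \GL(d,\mathbf R)$, tracking the $A$-dependent phase factors through the calculus transform, identifying $L_1$ and $L_2$ explicitly as time-frequency shifts on $M^p_{(\omega _2)}$ and $M^\infty _{(\omega _1)}$, and verifying that the exponent pattern in \eqref{Eq:omega120Cond} matches the resulting pairing of moderation constants. Once this bookkeeping is complete, all three hypotheses of Theorem \ref{Thm:Minimal} are in place, and that theorem delivers the embedding $M^r_{(\omega _0)}(\rr{2d})\subseteq s_{A,q}(M^\infty _{(\omega _1)},M^p_{(\omega _2)})$ asserted in \eqref{Eq:SchattenPseudo}.
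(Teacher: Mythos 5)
Your overall architecture --- applying Theorem \ref{Thm:Minimal} directly to the symbol class $\mascB =s_{A,q}(M^\infty _{(\omega _1)},M^p_{(\omega _2)})$ --- is not the route the paper takes: the paper factors $\op _A(a)=D_{\phi _1}\circ A\circ C_{\phi _2}$ through Gabor analysis/synthesis operators and a matrix $A\in \mathbb U^r(\omega _0,\Lambda ^2)$ (Lemma 3.3 of the cited work \cite{Toft13}), applies the \emph{matrix} minimality result (Proposition \ref{Prop:MatrixMinimal} via Proposition \ref{Prop:MatrixOpSchatten}) to place that matrix in $\mascI _q(\ell ^\infty _{(\omega _1)},\ell ^p_{(\omega _2)})$, and then composes using Proposition \ref{Prop:NuclQBanach}(1). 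Your route is workable for trivial weights, but in the weighted case it has a genuine gap precisely at the step you flag as the hardest one. The sandwich identity does hold, but the bound $\nm {L_1}{\maclB (M^p_{(\omega _2)})}\nm {L_2}{\maclB (M^\infty _{(\omega _1)})}\lesssim \omega _0(X_0,Y_0)$ does \emph{not} follow from \eqref{Eq:omega120Cond}. The operator norm of a time-frequency shift by $Z$ on $M^p_{(\omega _2)}$ is comparable to $\sup _W \omega _2(W)/\omega _2(W-Z)$, i.e.\ it is governed by a moderation constant of $\omega _2$ alone, and likewise for $\omega _1$; the product of these two independent suprema is not controlled by the single linked ratio $\omega _2(x,\xi )/\omega _1(y,\eta )$ that \eqref{Eq:omega120Cond} bounds. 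Concretely, take $A=\frac 12I$, $\omega _1(y,\eta )=e^{|y|}$, $\omega _2(x,\xi )=e^{|x|}$ and $\omega _0(s,t,u,w)=e^{|w|}$: then \eqref{Eq:omega120Cond} holds since $e^{|x|-|y|}\le e^{|x-y|}$, yet for a pure symbol translation by $X_0=(x_0,0,\dots )$ your estimate requires $e^{2|x_0|}\lesssim \omega _0(X_0,0)=1$, which fails. (Any cancellation between the two factors, which does occur for rank-one operators, is invisible to the product of operator norms.) This is presumably why the author's own sketch of this alternative strategy carries it out only for trivial weights and then passes to general weights by lifting isomorphisms for modulation spaces --- an ingredient absent from your plan.

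A secondary, repairable flaw: your justification of the $r$-triangle inequality for $\nm \cdo{\mascB}$ invokes the ``standard fact'' that $\mascI _q$ is a $\min (q,1)$-Banach space, but that fact is for operators into a Banach space. Here the target $M^p_{(\omega _2)}$ is only a $p$-Banach space, and already for $q=\infty$ the operator norm inherits the quasi-norm constant $2^{1/p-1}$ of the target, so $r\le \min (q,1)$ is not sufficient. The correct statement is the paper's Lemma \ref{Lemma:pSchattenTriangle}: the quasi-norm constant of $\mascI _q(\mascB _1,\mascB _2)$ is $2^{\max (1/p-1,0)+\max (1/q-1,0)+1/q}$, which together with the Aoki--Rolewicz theorem yields an equivalent $r$-norm exactly under \eqref{Eq:pqrCond}; so the hypothesis you have is strong enough, but the lemma you cite for this step is not the right one. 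The Gabor-atom verification via a rank-one $A$-Wigner distribution is fine.
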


\par

The proof of Theorem \ref{Thm:QuasiBanachSchatten} is based on
analogous Schatten-von Neumann properties of matrix operators.

\par

\begin{prop}\label{Prop:MatrixOpSchatten}
Let $p,q,r\in (0,\infty ]$ be such that \eqref{Eq:pqrCond} holds,
$J_k$ be index sets, $\omega _k$
be positive functions on $J_k$, $k=1,2$, let $\boldsymbol J = J_2\times J_1$ and suppose
$$
\frac {\omega _2(j_2)}{\omega _1(j_1)}\lesssim \omega (j_2,j_1),\quad (j_2,j_1)\in \boldsymbol J.
$$
Then $\mathbb U^r(\omega ,\boldsymbol J)\subseteq \mascI _q
(\ell ^\infty _{(\omega _1)}(J_1),\ell ^p_{(\omega _2)}(J_2))$.
\end{prop}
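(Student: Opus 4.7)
The plan is to apply the matrix minimality result Proposition~\ref{Prop:MatrixMinimal} to the quasi-Banach space
\[
\mascB = \mascI _q(\ell ^\infty _{(\omega _1)}(J_1),\ell ^p_{(\omega _2)}(J_2)),
\]
identified with a space of matrices through the correspondence
$(a(j_2,j_1))_{(j_2,j_1)\in \boldsymbol J}\leftrightarrow \bigl(b\mapsto (\sum _{j_1}a(j_2,j_1)b(j_1))_{j_2}\bigr)$. Two things need to be checked: that the elementary matrices $A_{k_2,k_1}$ lie in $\mascB$ with norm controlled by $\omega (k_2,k_1)$, and that the quasi-norm of $\mascB$ satisfies the $r$-triangle inequality for $r$ as in \eqref{Eq:pqrCond}.

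For condition (2) of Proposition~\ref{Prop:MatrixMinimal}, observe that $A_{k_2,k_1}$ corresponds to the rank-one operator $T_{k_2,k_1}:b\mapsto b(k_1)e_{k_2}$. Since any rank-one operator has a single non-zero singular value, equal to its operator norm, $\nm{A_{k_2,k_1}}{\mascI _q}=\nm{T_{k_2,k_1}}{\text{op}}$. A direct computation gives $\nm{T_{k_2,k_1}}{\text{op}}=\omega _2(k_2)/\omega _1(k_1)$, and the weight hypothesis yields $\nm{A_{k_2,k_1}}{\mascI _q}\lesssim \omega (k_2,k_1)$, with a constant independent of $(k_2,k_1)$.

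For condition (1), the $r$-triangle inequality, the starting point is the singular value estimate
\[
\sigma _{j+k-1}(T+S)^{p_0}\le \sigma _j(T)^{p_0}+\sigma _k(S)^{p_0},\qquad p_0=\min (p,1),
\]
obtained by taking rank-$(j-1)$ and rank-$(k-1)$ approximants of $T$ and $S$ and using the $p_0$-triangle inequality in the target quasi-Banach space $\ell ^p_{(\omega _2)}(J_2)$. Specializing to $k=j$, using the monotonicity $\sigma _{2j}(T+S)\le \sigma _{2j-1}(T+S)$, and summing with power $q$, I apply either Minkowski's inequality (when $q/p_0\ge 1$) or the subadditivity of $x\mapsto x^{q/p_0}$ (when $q/p_0\le 1$). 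This yields a quasi-triangle inequality on $\mascI _q$; Aoki--Rolewicz renorming as recalled in Section~\ref{sec1} converts it to an exact $r$-triangle inequality in the range described by \eqref{Eq:pqrCond}. The three summands in \eqref{Eq:pqrCond} correspond respectively to the quasi-Banach order of the target space (the $\max (1/p-1,0)$ contribution), the shift $j+k-1$ in the approximation-number inequality (the $\max (1/q-1,0)$ contribution), and the $q$-th power summation of singular values (the $1/q$ contribution).

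With both conditions verified, Proposition~\ref{Prop:MatrixMinimal} provides the continuous embedding $\mathbb U^r(\omega ,\boldsymbol J)\subseteq \mascB$, which is precisely the assertion. The main obstacle is the bookkeeping for step (1): the cases $q\le p_0$ and $q>p_0$ require separate sequence-space manipulations, and tracing through the constants produced by the shifted approximation-number inequality is what leads to the precise form of \eqref{Eq:pqrCond}.
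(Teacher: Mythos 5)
Your argument is correct and follows essentially the same route as the paper: take $\mascB =\mascI _q(\ell ^\infty _{(\omega _1)}(J_1),\ell ^p_{(\omega _2)}(J_2))$, bound the elementary matrices by their operator norms $\omega _2(k_2)/\omega _1(k_1)\lesssim \omega (k_2,k_1)$, derive the shifted singular-value inequality yielding the quasi-triangle constant $2^{\max (\frac 1p-1,0)+\max (\frac 1q-1,0)+\frac 1q}\le 2^{\frac 1r-1}$, and apply Proposition \ref{Prop:MatrixMinimal}. The only cosmetic difference is that you pass through Aoki--Rolewicz to obtain the exact $r$-power triangle inequality, whereas the paper reads it off from Lemma \ref{Lemma:pSchattenTriangle} under the standing renorming convention of Section \ref{sec1}; either way the elementary-matrix bound survives the equivalent renorming, so the conclusion is unaffected.
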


\par

We need the following lemma for the proof of Proposition
\ref{Prop:MatrixOpSchatten}.

\begin{lemma}\label{Lemma:pSchattenTriangle}
Let $p,q\in (0,\infty ]$, and let $\mascB _1$ and $\mascB _2$ be quasi-Banach spaces
such that \eqref{Eq:pTriangle} holds with $\mascB _2$ in place of $\mascB$. Then
\begin{align}
\sigma _{j_1+j_2+1}(T_1+T_2)
&\le 2^{\max (\frac 1p-1,0)}(\sigma _{j_1+1}(T_1)
+\sigma _{j_2+1}(T_2))\label{Eq:SingQuBTriang}
\intertext{and}
\nm {T_1+T_2}{\mascI _q(\mascB _1,\mascB_2)}
&\le
2^{\max (\frac 1p-1,0)+\max (\frac 1q-1,0)+\frac 1q}
(\nm {T_1}{\mascI _q(\mascB _1,\mascB_2)}
+
\nm {T_2}{\mascI _q(\mascB _1,\mascB _2)})\label{Eq:ShattQuBTriang}
\end{align}
when $T_1,T_2\in \maclB (\mascB _1,\mascB _2)$.
\end{lemma}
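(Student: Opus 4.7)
The plan is to prove (a) first by the standard Ky-Fan style approximation argument, and then obtain (b) by diagonalizing in (a) and splitting the Schatten sum into odd/even parts.

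For (a), given $\ep>0$, I would pick linear operators $S_k\colon \mascB_1\to\mascB_2$ of rank at most $j_k$ with $\nm{T_k-S_k}{\mascB_1\to\mascB_2}\le \sigma_{j_k+1}(T_k)+\ep$ for $k=1,2$. Then $S_1+S_2$ has rank at most $j_1+j_2$, so
\begin{equation*}
\sigma_{j_1+j_2+1}(T_1+T_2)\le \nm{(T_1-S_1)+(T_2-S_2)}{\mascB_1\to\mascB_2}.
\end{equation*}
The hypothesis \eqref{Eq:pTriangleIneq} on $\mascB_2$ combined with \eqref{weaktriangle} yields $\nm{u+v}{\mascB_2}\le 2^{\max(1/p-1,0)}(\nm u{\mascB_2}+\nm v{\mascB_2})$ for $u,v\in\mascB_2$, and this lifts pointwise to the operator norm. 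Applying this to $u=(T_1-S_1)f$, $v=(T_2-S_2)f$, taking suprema over the unit ball of $\mascB_1$, and sending $\ep\to 0$ gives \eqref{Eq:SingQuBTriang}.

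For (b), set $C=2^{\max(1/p-1,0)}$ and specialize \eqref{Eq:SingQuBTriang} to $j_1=j_2=j$ to get $\sigma_{2j+1}(T_1+T_2)\le C(\sigma_{j+1}(T_1)+\sigma_{j+1}(T_2))$. Monotonicity of singular values yields $\sigma_{2j+2}(T_1+T_2)\le \sigma_{2j+1}(T_1+T_2)$, so both odd- and even-indexed terms are controlled by the same right-hand side. When $q=\infty$ only $\sigma_1$ matters and the operator-norm estimate from part (a) gives \eqref{Eq:ShattQuBTriang} directly. For $q<\infty$ I would write
\begin{equation*}
\nm{T_1+T_2}{\mascI_q(\mascB_1,\mascB_2)}^q
=\sum_{j\ge 0}\sigma_{2j+1}^q+\sum_{j\ge 0}\sigma_{2j+2}^q
\le 2C^q\sum_{j\ge 0}(\sigma_{j+1}(T_1)+\sigma_{j+1}(T_2))^q.
\end{equation*}

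The final step is to absorb the sum back into $\nm{T_1}{\mascI_q}+\nm{T_2}{\mascI_q}$, using Minkowski in $\ell^q$ when $q\ge 1$ (contributing an extra factor $1$) and the subadditivity $(a+b)^q\le a^q+b^q$ when $q\le 1$ (contributing a factor $2^{1/q-1}$ after applying $(\cdot)^{1/q}$ via $(a^q+b^q)^{1/q}\le 2^{1/q-1}(a+b)$). Combining with the factor $2^{1/q}$ from the outer sum produces the uniform constant $2^{1/q+\max(1/q-1,0)}\cdot C=2^{\max(1/p-1,0)+\max(1/q-1,0)+1/q}$, matching \eqref{Eq:ShattQuBTriang}.

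The main obstacle is just careful bookkeeping of exponents so that the two branches $q\ge 1$ and $q\le 1$ glue into the single formula $\max(1/q-1,0)+1/q$, and verifying the $q=\infty$ boundary case reduces correctly; there is no conceptual difficulty, as the Banach-space case of this lemma is classical and everything propagates through the quasi-triangle inequality on $\mascB_2$.
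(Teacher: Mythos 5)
Your proposal is correct and follows essentially the same route as the paper: a Ky Fan--type rank-additivity argument combined with the lifted quasi-triangle inequality on $\mascB _2$ for \eqref{Eq:SingQuBTriang}, then the specialization $j_1=j_2=j$, the monotonicity bound $\sum _{j\ge 1}\sigma _j^q\le 2\sum _{j\ge 0}\sigma _{2j+1}^q$, and the $\ell ^q$ (quasi-)triangle inequality with constant $2^{\max (1/q-1,0)}$ for \eqref{Eq:ShattQuBTriang}. The exponent bookkeeping and the $q=\infty$ reduction are handled correctly, so nothing further is needed.
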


\par

\begin{proof}
Let $T_1,T_2, T_{1,j},T_{2,j}\in \maclB (\mascB _1,\mascB _2)$
be such that $T_{1,j}$ and $T_{2,j}$ are operators of rank at most $j-1$,
and let $f\in \mascB_1$. Then
\begin{multline*}
\nm {(T_1+T_2-(T_{1,j_1}+T_{2,j_2}))f}{\mascB _2}
\\[1ex]
\le
2^{\max (\frac 1p-1,0)}\nm {(T_1-T_{1,j_1})f}{\mascB _2}
+ \nm {(T_2-T_{2,j_2})f}{\mascB _2},
\end{multline*}
which gives
\begin{multline*}
\nm {T_1+T_2-(T_{1,j}+T_{2,j})}{\maclB (\mascB _1,\mascB _2)}
\\[1ex]
\le
2^{\max (\frac 1p-1,0)} (\nm {T_1-T_{1,j}}{\maclB (\mascB _1,\mascB _2)}
+ \nm {T_2-T_{2,j}}{\maclB (\mascB _1,\mascB _2)}).
\end{multline*}
By taking the infimum on the right-hand side over all possible $T_{1,j}$ and
$T_{2,j}$ we obtain \eqref{Eq:SingQuBTriang}.

\par

By letting $j_1=j_2=j$ in \eqref{Eq:SingQuBTriang} and using the fact that
$\sigma _j(T)$ is non-increasing with respect to $j$ we get
\begin{multline*}
\nm {T_1+T_2}{\mascI _q}
\le
\left ( 2\sum _{j=0}^\infty \sigma _{2j+1}(T_1+T_2)^q \right )^{\frac 1q} 
\\[1ex]
\le
2^{\max (\frac 1p-1,0)+\frac 1q} \nm {\{ \sigma _j(T_1)+\sigma _j(T_2) \} _{j=1}^\infty }
{\ell ^q(\mathbf Z_+)}
\\[1ex]
\le
2^{\max (\frac 1p-1,0)+\max (\frac 1q-1,0)+\frac 1q}(
\nm {\{ \sigma _j(T_1)}{\ell ^q(\mathbf Z_+)}+\nm {\sigma _j(T_2) \} _{j=1}^\infty }
{\ell ^q(\mathbf Z_+)}
\\[1ex]
= 2^{\max (\frac 1p-1,0)+\max (\frac 1q-1,0)+\frac 1q}
(\nm {T_1}{\mascI _q}+\nm {T_2}{\mascI _q}).\qedhere
\end{multline*}
%
%
%
\end{proof}

\par

\begin{proof}[Proof of Proposition \ref{Prop:MatrixOpSchatten}]
We may assume that equality holds in \eqref{Eq:pqrCond}. Let 
$\mascB = \mascI _q(\ell ^\infty _{(\omega _1)}(J_1),\ell ^p _{(\omega _2)}(J_2))$
and $\mascB _2= \ell ^p_{(\omega _2)}(J_2)$. Then
$A_1+A_2\in \mascB$ when $A_1,A_2\in \mascB$, and \eqref{Eq:ShattQuBTriang}
shows that
$$
\nm {A_1+A_2}{\mascB}^r \le
\nm {A_1}{\mascB}^r+\nm {A_2}{\mascB}^r,\quad
A_1,A_2\in \mascB .
$$
Furthermore, if $A_{j_2,j_1}$ are the
same as in Proposition \ref{Prop:MatrixMinimal},
then $A_{j_2,j_1}\in \mascB$ since $A_{j_2,j_1}\in \mathbb U_0$. For
$f\in \ell ^\infty _{(\omega _1)}(J_1)$ with $\nm f{\ell ^\infty _{(\omega _1)}}\le 1$
we get
\begin{multline*}
\nm {A_{j_{0,2},j_{0,1}}f}{\ell ^p_{(\omega _2)}}
=
\left (
\sum _{j_2} \left |
\delta _{j_2,j_{0,2}}\sum _{j_1} \delta _{j_1,j_{0,1}}f(j_1)\omega _2(j_2)
\right |^p
\right )^{\frac 1p}
\\[1ex]
=
|f(j_{0,1})\omega _2(j_{0,2})| \le C\frac {\omega _2(j_{0,2})}{\omega _1(j_{0,1})}.
\end{multline*}
From these estimates and Proposition \ref{Prop:MatrixMinimal} we get
$\mathbb U^r(\omega ,\boldsymbol J)\subseteq \mascB$, and the result follows.
\end{proof}

\par

\begin{proof}[Proof of Theorem \ref{Thm:QuasiBanachSchatten}]
By \eqref{calculitransform} and Proposition \ref{Prop:ExpOpSTFT}
we may assume that $A=0$.
Let
$$
\omega _0(x,\xi ,y,\eta )=\omega (x,\eta ,\xi -\eta,y-x).
$$
By Lemma 3.3
in \cite{Toft13}, there is a lattice $\Lambda \subseteq \rr d$,
$A\in \mathbb U^r(\omega _0,\Lambda ^2)$ and $\phi _1,\phi _2\in M^r_{(v)}(\rr d)$
such that $\op (a) = D_{\phi _1}\circ A\circ C_{\phi _2}$.
More refined,  by \cite{KSV} we may choose both $\phi _1$, $\phi _2$
and their dual windows to belong to $\Sigma _1(\rr d)$ (cf. Remark 
\ref{Rem:GaborAtomOrderLessThanOne}).

\par

We have
\begin{alignat}{2}
C_{\phi _2} &: & M^\infty _{(\omega _1)}(\rr d) &\to \ell ^\infty
_{(\omega _1)}(\Lambda ^2)\label{Eq:SamplCont}
\intertext{and}
D_{\phi _1} &: & \ell ^p _{(\omega _2)}(\Lambda ^2) &\to
M^p _{(\omega _2)}(\rr d)\label{Eq:SynthCont}
\end{alignat}
are continuous and
$$
A\in \mascI _q(\ell ^\infty _{(\omega _1)}(\Lambda ^2),
\ell ^p _{(\omega _2)}(\Lambda ^2))
$$
in view of Proposition \ref{Prop:MatrixOpSchatten}. Hence, by Proposition
\ref{Prop:NuclQBanach} (1) we get
$$
\op (a) = D_{\phi _1}\circ A\circ C_{\phi _2}
\in \mascI _q(M^\infty _{(\omega _1)}(\rr d),
M^p _{(\omega _2)}(\rr d)),
$$
which is the same as \eqref{Eq:SchattenPseudo}.
\end{proof}

\par

Theorem \ref{Thm:QuasiBanachSchatten} also leads to the following
Schatten-von Neumann result on operators with kernels in modulation
spaces, which in particular improve \cite[Corollary 3.1]{DeRuWa}. 

\par

\begin{thm}\label{Thm:KernelsSchatten}
Let $\omega _j\in \mascP _E(\rr {2d_j})$ for $j=1,2$ and $\omega \in
\mascP _E(\rr {2d_2+2d_1})$ be such that
$$
\frac {\omega _2(x,\xi )}{\omega _1(y,\eta )}\lesssim \omega (x,y,\xi ,-\eta ),
\qquad x,\xi \in \rr {d_2},\ y,\eta \in \rr {d_1},
$$
and let $p,q,r\in (0,\infty ]$ be such that \eqref{Eq:pqrCond} holds.
Also let $T$ be a linear and continuous operator from $\Sigma _1(\rr {d_1})$
to $\Sigma _1'(\rr {d_2})$ with distribution kernel
$K\in M^r_{(\omega )}(\rr {d_2+d_1})$.
Then
$$
T\in \mascI _q(M^\infty _{(\omega _1)}(\rr {d_1}),M^p_{(\omega _2)}(\rr {d_2}))
$$
and
$$
\nm T{\mascI _q(M^\infty _{(\omega _1)},M^p_{(\omega _2)})}\lesssim
\nm K{M^r_{(\omega )}}.
$$
\end{thm}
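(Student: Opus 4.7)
The plan is to reduce the statement to the matrix Schatten-von Neumann result of Proposition \ref{Prop:MatrixOpSchatten}, by expanding $T$ via a tensor-product Gabor decomposition of its kernel $K$. This is structurally the same argument as for Theorem \ref{Thm:QuasiBanachSchatten}, but applied directly on the kernel side, which in particular allows $d_1\neq d_2$ without going through pseudo-differential calculus.

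First I would select submultiplicative weights $v_j\in \mascP_E(\rr{2d_j})$ so that $\omega_j$ is $v_j$-moderate and $\omega$ is $v$-moderate for $v(x,y,\xi,\eta) = v_2(x,\xi)v_1(y,-\eta)$. By Remark \ref{Rem:GaborAtomOrderLessThanOne} I may choose Gabor atoms $\phi_j\in \Sigma_1(\rr{d_j})$ of order $r$ with respect to $v_j$, with dual windows $\tilde\phi_j\in \Sigma_1(\rr{d_j})$, on lattices $\Lambda_j\subseteq \rr{2d_j}$. The tensor product $\Phi = \tilde\phi_2\otimes \overline{\phi_1}$ lies in $\Sigma_1(\rr{d_2+d_1})\subseteq M^r_{(v)}(\rr{d_2+d_1})\setminus 0$, so by Proposition \ref{p1.4B}~(1) the STFT $V_\Phi K$ produces an equivalent quasi-norm on $M^r_{(\omega)}(\rr{d_2+d_1})$, and its restriction to the lattice $\Lambda_2\times \Lambda_1$ (with the obvious sign flip in the $\Lambda_1$-frequency coordinate) characterizes membership in an appropriate $\mathbb U^r$-class. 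A direct computation of the kernel pairing yields
\begin{equation*}
V_\Phi K(x_j, y_m, \xi_k, -\eta_n) = \bigl\langle T\bigl(\phi_1(\cdo - y_m) e^{i\scal \cdo {\eta_n}}\bigr),\, \tilde\phi_2(\cdo - x_j) e^{i\scal \cdo {\xi_k}}\bigr\rangle_{L^2},
\end{equation*}
so that, up to phase factors, these sampled values form the matrix $A = (A_{(j,k),(m,n)})$ of $T$ in the Gabor system. The hypothesis on $\omega$ then gives $A\in \mathbb U^r(\omega_0,\boldsymbol J)$ with $\boldsymbol J = J_2\times J_1$ indexing the two lattices, where $\omega_0$ satisfies exactly the pointwise estimate $\omega_2/\omega_1 \lesssim \omega_0$ required by Proposition \ref{Prop:MatrixOpSchatten}, with $\nm A{\mathbb U^r(\omega_0,\boldsymbol J)}\lesssim \nm K{M^r_{(\omega)}}$.

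Proposition \ref{Prop:MatrixOpSchatten} then yields $A\in \mascI_q(\ell^\infty_{(\omega_1)}(J_1), \ell^p_{(\omega_2)}(J_2))$ with quasi-norm $\lesssim \nm K{M^r_{(\omega)}}$. The Gabor frame theory recalled in Section \ref{sec1} provides bounded analysis and synthesis operators
\begin{equation*}
C_{\tilde\phi_1}\colon M^\infty_{(\omega_1)}(\rr{d_1}) \to \ell^\infty_{(\omega_1)}(J_1),
\qquad
D_{\phi_2}\colon \ell^p_{(\omega_2)}(J_2) \to M^p_{(\omega_2)}(\rr{d_2}),
\end{equation*}
and Gabor reconstruction gives the factorization $T = D_{\phi_2}\circ A\circ C_{\tilde\phi_1}$. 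Applying Proposition \ref{Prop:NuclQBanach}~(1) then delivers $T\in \mascI_q(M^\infty_{(\omega_1)}(\rr{d_1}), M^p_{(\omega_2)}(\rr{d_2}))$ together with the claimed estimate. The main obstacle is the bookkeeping identifying the sampled STFT of $K$ with the Gabor matrix coefficients of $T$ under the correct phase and sign conventions, so that the weight hypothesis of the theorem translates, without loss, into the matrix weight inequality required by Proposition \ref{Prop:MatrixOpSchatten}; once this identification is set up, the remainder is a direct assembly of the earlier results.
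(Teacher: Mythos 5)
Your argument is correct, but it takes a different route from the paper's. The paper proves this theorem by reduction to Theorem \ref{Thm:QuasiBanachSchatten}: when $d_1=d_2$ it passes from the kernel $K$ to the symbol $a$ via the isomorphism $a\mapsto K_{a,A}$ (quoting Proposition 2.5 (2) in \cite{Toft16} for the corresponding modulation-space equivalence), and when $d_1\neq d_2$ it first pads the kernel by tensoring with a fixed $\phi \in \Sigma _1(\rr {d_0})\setminus 0$ in the missing $d_0=|d_2-d_1|$ variables, checks $\nm {K}{M^r_{(\omega )}}\asymp \nm {K_0}{M^r_{(\omega _0)}}$ and $\sigma _j(T)\lesssim \sigma _j(T_0)$ for the padded operator $T_0$, and then applies the equal-dimension case. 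You instead work directly on the kernel: you expand $K$ in a tensor-product Gabor system, identify the sampled short-time Fourier transform $V_\Phi K(x_j,y_m,\xi _k,-\eta _n)$ (up to phases) with the Gabor matrix of $T$, invoke Proposition \ref{Prop:MatrixOpSchatten} for that matrix, and reassemble through the bounded analysis and synthesis operators together with Proposition \ref{Prop:NuclQBanach} (1). This is essentially the kernel-side analogue of the paper's proof of Theorem \ref{Thm:QuasiBanachSchatten} itself (the factorization $T=D_{\phi _2}\circ A\circ C_{\widetilde \phi _1}$ playing the role of \cite[Lemma 3.3]{Toft13}). What your route buys is that $d_1\neq d_2$ is handled natively, with no padding construction and no detour through the pseudo-differential correspondence; what the paper's route buys is economy, since Theorem \ref{Thm:QuasiBanachSchatten} is already available and only the elementary padding estimates need to be verified. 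The one place where you should be explicit is the discrete characterization step: you need that the restriction of $V_\Phi K$ to the product lattice (with the sign flip in the $\eta$-coordinate) gives an equivalent quasi-norm on $M^r_{(\omega )}$, which follows from \cite[Theorem 3.7]{Toft12} applied to the tensor-product window, and that the sign flip is absorbed by replacing $\omega$ with $(x,y,\xi ,\eta )\mapsto \omega (x,y,\xi ,-\eta )$, which is again moderate.
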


\par

\begin{proof}
If $d_1=d_2$, then the
result follows from Proposition 2.5 (2) in \cite{Toft16}
and Theorem \ref{Thm:QuasiBanachSchatten}. We need to consider the case
when $d_1\neq d_2$.

\par

If $d_2>d_1$, then let $d_0=d_2-d_1$, $\phi \in \Sigma _1(\rr {d_0})\setminus 0$
be fixed, and set
\begin{gather*}
K_0(x,y_1)= K(x,y)\phi (y_0),
\quad
\omega _0(x,y_1,\xi ,\eta _1)=\omega (x,y,\xi ,\eta)
\intertext{and}
\omega _{0,1}(y_1,\eta _1)=\omega _1(y,\eta )
\intertext{when}
y_1=(y,y_0)\in \rr {d_1}\times \rr {d_0} \simeq \rr {d_2}
\quad \text{and}\quad
\eta_1=(\eta ,\eta _0)\in \rr {d_1}\times \rr {d_0} \simeq \rr {d_2}.
\end{gather*}
Also let $T_0$ be the operator from $\maclS _{1/2}(\rr {d_2})$ to
$\maclS _{1/2}'(\rr {d_2})$ with kernel $K_0$. By straight-forward computations
it follows that $\nm K{M^r_{(\omega )}}\asymp \nm {K_0}{M^r_{(\omega _0)}}$
and
$$
\sigma _j(T; M^\infty _{(\omega _1)}(\rr {d_1}),M^p _{(\omega _2)}(\rr {d_2}))
\lesssim
\sigma _j(T_0; M^\infty _{(\omega _{0,1})}(\rr {d_2}),M^p _{(\omega _2)}(\rr {d_2})),
$$
giving that
$$
\nm {T}{\mascI _q(M^\infty _{(\omega _1)},M^p _{(\omega _2)})}
\lesssim
\nm {T_0}{\mascI _q(M^\infty _{(\omega _{0,1})},M^p _{(\omega _2)})}
\lesssim \nm {K_0}{M^r_{(\omega _0)}} \asymp
\nm K{M^r_{(\omega )}},
$$
and the result follows in this case.

\par

If instead $d_2<d_1$, then let $d_0=d_1-d_2$, $\phi \in \Sigma _1(\rr {d_0})\setminus 0$
be fixed, and set
\begin{gather*}
K_0(x_1,y)= \phi (x_0)K(x,y),
\quad
\omega _0(x_1,y,\xi _1,\eta )=\omega (x,y,\xi ,\eta)
\intertext{and}
\omega _{0,2}(x_1,\xi _1)=\omega _2(x,\xi )
\intertext{when}
x_1=(x,x_0)\in \rr {d_2}\times \rr {d_0} \simeq \rr {d_1}
\quad \text{and}\quad
\xi_1=(\xi ,\xi _0)\in \rr {d_2}\times \rr {d_0} \simeq \rr {d_1}.
\end{gather*}
By similar arguments as above we get
$$
\nm {T}{\mascI _q(M^\infty _{(\omega _1)},M^p _{(\omega _2)})}
\lesssim
\nm {T_0}{\mascI _q(M^\infty _{(\omega _1)},M^p _{(\omega _{0,2})})}
\lesssim \nm K{M^r_{(\omega )}},
$$
when $T_0$ is the operator with kernel $K_0$. This gives the result.
\end{proof}

\par

\section{Nuclearity properties for
operators with kernels in modulation spaces}
\label{sec4}

\par

In this section we perform analogous investigations as in the the previous section to
deduce $r$-nuclear properties of operators with kernels in $M^p_{(\omega )}$.
At the same time we deduce analogous properties for
pseudo-differential operators with symbols in $M^p_{(\omega )}$.

\par

First we have the following concerning $\mascN _p(\mascB _0,\mascB )$ in Section
\ref{sec1}.

\par

\begin{prop}\label{Prop:NuclQBanach}
Let $p\in (0,1]$, $\mascB _0$ be a Banach space and $\mascB$ be a
quasi-Banach space of order
$p$. Then $\mascN _p(\mascB _0,\mascB )$ is a quasi-Banach space of order $p$.
\end{prop}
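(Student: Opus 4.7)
The plan is to verify the two properties behind the claim: that $\nm \cdo{\mascN _p(\mascB _0,\mascB )}$ satisfies \eqref{Eq:pTriangleIneq} with exponent $p$, and that $\mascN _p(\mascB _0,\mascB )$ is complete under this quasi-norm. The argument closely parallels the proof of Proposition \ref{CalMpComplete}, adapted to nuclear representations rather than Gabor expansions.

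For the $p$-triangle inequality, I would fix $T_1,T_2\in \mascN _p(\mascB _0,\mascB )$ and $\ep >0$, and choose for each $k=1,2$ a nuclear representation $T_k=\sum _j e_{j,k}\otimes \ep _{j,k}$ with $\sum _j \nm {\ep _{j,k}}{\mascB _0'}^p\nm {e_{j,k}}{\mascB}^p\le \nm {T_k}{\mascN _p}^p+\ep /2$. Concatenating the two series gives a nuclear representation of $T_1+T_2$ whose associated weighted sum is bounded by $\nm {T_1}{\mascN _p}^p+\nm {T_2}{\mascN _p}^p+\ep$; letting $\ep \to 0$ yields the $p$-triangle inequality. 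Non-degeneracy follows from the continuous embedding $\mascN _p(\mascB _0,\mascB )\hookrightarrow \maclB (\mascB _0,\mascB )$, which is in turn a consequence of the elementary estimate $\nm {Tf}{\mascB}^p\le \nm f{\mascB _0}^p\sum _j\nm {\ep _j}{\mascB _0'}^p\nm {e_j}{\mascB}^p$, valid for any admissible representation.

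For completeness, let $\{ T_n\}$ be a Cauchy sequence and extract a subsequence $\{ T_{n_k}\}$ with $\nm {T_{n_k}-T_{n_{k-1}}}{\mascN _p}^p\le 2^{-k}$ for $k\ge 2$ (setting $T_{n_0}:=0$). For each $k\ge 1$ pick a near-optimal nuclear representation of $T_{n_k}-T_{n_{k-1}}$, then concatenate all these representations over the joint index $(j,k)$ into a single formal series. The sum of weighted $p$-th powers over all indices is dominated by $\nm {T_{n_1}}{\mascN _p}^p+1+\sum _{k\ge 2}2^{-k+1}<\infty$. By the bound $\nm \cdo{\maclB}\le \nm \cdo{\mascN _p}$, the partial sums of the concatenated series form a Cauchy sequence in $\maclB (\mascB _0,\mascB )$, converging to some $T\in \maclB (\mascB _0,\mascB )$ which then satisfies \eqref{Eq:rNuclDef}--\eqref{Eq:NuclQNormEst} and thereby belongs to $\mascN _p(\mascB _0,\mascB )$. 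Writing
$$
T-T_{n_k} = \sum _{j\ge k+1}(T_{n_j}-T_{n_{j-1}})
$$
and applying the $p$-triangle inequality already established gives $\nm {T-T_{n_k}}{\mascN _p}^p\le \sum _{j\ge k+1}2^{-j+1}\to 0$; combined with the Cauchy property this yields $T_n\to T$ in $\mascN _p$.

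The main obstacle is purely organizational: one must arrange the concatenation so that the resulting series simultaneously converges in $\maclB (\mascB _0,\mascB )$ (to pin down a single operator $T$) and has its weighted $p$-norms summing finitely (to place $T$ in $\mascN _p$). Both conditions are controlled by the same estimate $\nm \cdo{\maclB}\le \nm \cdo{\mascN _p}$, so this becomes routine once the bookkeeping between the indices $j$ and $k$ is set up carefully.
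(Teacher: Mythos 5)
Your proposal is correct and follows essentially the same route as the paper: the paper declares the order-$p$ quasi-norm property ``clear'' and states that completeness ``follows by similar arguments as in the proof of Proposition \ref{CalMpComplete}'', which is precisely the concatenation of near-optimal nuclear representations for the $p$-triangle inequality, the embedding $\nm {T}{\maclB (\mascB _0,\mascB )}\le \nm T{\mascN _p(\mascB _0,\mascB )}$ for non-degeneracy, and the telescoping subsequence argument for completeness that you spell out. In fact you supply more detail than the paper, which leaves these steps to the reader.
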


\par

It is clear that $\mascN _p(\mascB _0,\mascB )$ in Proposition \ref{Prop:NuclQBanach}
is a quasi-normed space of order $p$. What remains to verify is that
$\mascN _p(\mascB _0,\mascB )$ is complete, and this follows by similar arguments as
in the proof of Proposition \ref{CalMpComplete}. The details are left for the reader.
(See also \cite{Gro}.)

\par

We have now the following results.

\par

\begin{thm}\label{Thm:QuasiBanachNuclear}
Let $A\in \GL (d,\mathbf R)$, $p\in (0,1]$
and let
$\omega _0\in \mascP (\rr {4d})$ and $\omega _1,\omega _2
\in \mascP _E(\rr {2d})$ be such that
\eqref{Eq:omega120Cond} holds true. Then
\begin{equation}\label{Eq:NuclearPseudo}
M^p_{(\omega _0)}(\rr {2d})\subseteq 
u_{A,p} (M^\infty _{(\omega _1)}(\rr d),M^p_{(\omega _2)}(\rr d) ).
\end{equation}
\end{thm}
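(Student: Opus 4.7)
The plan is to mirror the proof of Theorem \ref{Thm:QuasiBanachSchatten}, replacing Schatten-von Neumann classes by the $p$-nuclear class $\mascN _p$. As in the Schatten case, I would first use \eqref{calculitransform} together with Proposition \ref{Prop:ExpOpSTFT} to reduce to the Kohn--Nirenberg normalization $A=0$; the weight condition \eqref{Eq:omega120Cond} transforms accordingly, and $\nm a{M^p_{(\omega _0)}}$ is unchanged up to equivalence. Next, invoking Lemma 3.3 in \cite{Toft13} together with Remark \ref{Rem:GaborAtomOrderLessThanOne}, I would factor the operator through a Gabor analysis/synthesis diagram
\begin{equation*}
\op (a) \;=\; D_{\phi _1}\circ M\circ C_{\phi _2},
\end{equation*}
where $\phi _1,\phi _2\in \Sigma _1(\rr d)$ together with their canonical dual windows, $C_{\phi _2}$ and $D_{\phi _1}$ are the continuous analysis/synthesis maps as in \eqref{Eq:SamplCont} and \eqref{Eq:SynthCont}, and $M\in \mathbb U^p(\omega _0,\Lambda ^2)$ with quasi-norm dominated by $\nm a{M^p_{(\omega _0)}}$.

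In view of Proposition \ref{Prop:NuclQBanach}(2), $p$-nuclearity is preserved under pre- and post-composition with continuous maps, so it suffices to establish the matrix analogue
\begin{equation*}
\mathbb U^p(\omega ,\boldsymbol J)\;\subseteq \;\mascN _p\!\left (\ell ^\infty _{(\omega _1)}(J_1),\,\ell ^p_{(\omega _2)}(J_2)\right )
\end{equation*}
whenever $\omega _2(j_2)/\omega _1(j_1)\lesssim \omega (j_2,j_1)$. This is proved via the matrix minimality result, Proposition \ref{Prop:MatrixMinimal}, applied to $\mascB =\mascN _p(\ell ^\infty _{(\omega _1)}(J_1),\ell ^p_{(\omega _2)}(J_2))$. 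By the second Proposition \ref{Prop:NuclQBanach}, $\mascB$ is a quasi-Banach space of order $p$, so hypothesis (1) of Proposition \ref{Prop:MatrixMinimal} holds. For hypothesis (2), the elementary matrix $A_{k_2,k_1}$ acts on $f\in \ell ^\infty _{(\omega _1)}(J_1)$ by the rank-one map $f\mapsto f(k_1)\,e_{k_2}$, which has the nuclear representation $e_{k_2}\otimes \delta _{k_1}$. Since $\nm {e_{k_2}}{\ell ^p_{(\omega _2)}}=\omega _2(k_2)$ and $\nm {\delta _{k_1}}{(\ell ^\infty _{(\omega _1)})'}\le 1/\omega _1(k_1)$, one obtains
\begin{equation*}
\nm {A_{k_2,k_1}}{\mascN _p}\;\le \;\frac {\omega _2(k_2)}{\omega _1(k_1)}\;\lesssim \;\omega (k_2,k_1).
\end{equation*}
Proposition \ref{Prop:MatrixMinimal} then yields the desired matrix embedding, and assembling the factorization produces \eqref{Eq:NuclearPseudo}.

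I expect the main technical point to be the verification that $\mascN _p(\mascB _0,\mascB )$ truly is a quasi-Banach space of order $p$ (the second Proposition \ref{Prop:NuclQBanach}): the $p$-triangle inequality is immediate by concatenating two nuclear representations and the scalar inequality $(a+b)^p\le a^p+b^p$, while completeness is an exact analogue of the Cauchy-sequence dyadic telescoping argument in the proof of Proposition \ref{CalMpComplete}. All remaining ingredients -- the Gabor factorization, the minimality engine, and the composition stability of $\mascN _p$ -- are already in place in the paper, so once the matrix embedding is established the theorem follows by essentially the same one-line assembly used at the end of the proof of Theorem \ref{Thm:QuasiBanachSchatten}.
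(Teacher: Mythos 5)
Your proposal follows the paper's own argument essentially verbatim: reduction to $A=0$ via \eqref{calculitransform} and Proposition \ref{Prop:ExpOpSTFT}, the Gabor factorization $\op (a)=D_{\phi _1}\circ M\circ C_{\phi _2}$ from Lemma 3.3 of \cite{Toft13}, the matrix embedding $\mathbb U^p(\omega ,\boldsymbol J)\subseteq \mascN _p(\ell ^\infty _{(\omega _1)},\ell ^p_{(\omega _2)})$ obtained by applying Proposition \ref{Prop:MatrixMinimal} with the rank-one estimate $\nm {A_{k_2,k_1}}{\mascN _p}\le \omega _2(k_2)/\omega _1(k_1)$ (this is exactly the paper's Proposition \ref{Prop:MatrixOpNuclear}), and assembly via the composition stability of $\mascN _p$. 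The argument is correct and coincides with the paper's proof.
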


\par

\begin{thm}\label{Thm:KernelsNuclear}
Let $\omega _j\in \mascP _E(\rr {2d_j})$ for $j=1,2$ and $\omega \in
\mascP _E(\rr {2d_2+2d_1})$ be such that
$$
\frac {\omega _2(x,\xi )}{\omega _1(y,\eta )}\lesssim \omega (x,y,\xi ,-\eta ),
\qquad x,\xi \in \rr {d_2},\ y,\eta \in \rr {d_1},
$$
and let $p\in (0,1]$. Also let $T$ be a linear and continuous operator
from $\maclS _{1/2}(\rr {d_1})$ to $\maclS _{1/2}'(\rr {d_2})$ with
distribution kernel $K\in M^p_{(\omega )}(\rr {d_2+d_1})$.
Then
$$
T\in \mascN _p(M^\infty _{(\omega _1)}(\rr {d_1}),M^p_{(\omega _2)}(\rr {d_2}))
$$
and
$$
\nm T{\mascN _p(M^\infty _{(\omega _1)},M^p_{(\omega _2)})}\lesssim
\nm K{M^p_{(\omega )}}.
$$
\end{thm}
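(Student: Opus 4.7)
The plan is to follow the template set by Theorem \ref{Thm:KernelsSchatten}, reducing the kernel case to the pseudo-differential (symbol) case via Proposition 2.5(2) of \cite{Toft16} when $d_1=d_2$, and using the tensor-product thickening trick (tensoring the kernel with a fixed function in $\Sigma_1(\rr{|d_1-d_2|})\setminus 0$) to reduce $d_1\neq d_2$ to $d_1=d_2$, exactly as in the second half of the proof of Theorem \ref{Thm:KernelsSchatten}. The estimates on modulation-space norms and on nuclear norms for the thickened operator are identical to those in the Schatten case since tensoring with a fixed Gelfand--Shilov function is a bounded operation on the relevant modulation spaces. Hence the genuine content lies in establishing the symbol-version Theorem \ref{Thm:QuasiBanachNuclear}.

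For Theorem \ref{Thm:QuasiBanachNuclear}, I would first reduce to the case $A=0$ by appealing to \eqref{calculitransform} together with Proposition \ref{Prop:ExpOpSTFT}, which provides a homeomorphism between $M^p_{(\omega_0)}$ spaces under the change of quantization. The strategy thereafter is to exploit the factorization $\op(a)=D_{\phi_1}\circ \widetilde A\circ C_{\phi_2}$ furnished by Lemma 3.3 of \cite{Toft13}, with $\phi_1,\phi_2\in\Sigma_1(\rr d)$ chosen via \cite{KSV} (cf. Remark \ref{Rem:GaborAtomOrderLessThanOne}) and with $\widetilde A\in \mathbb U^p(\omega_0,\Lambda^2)$. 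The continuities
\[
C_{\phi_2}:M^\infty_{(\omega_1)}(\rr d)\to \ell^\infty_{(\omega_1)}(\Lambda^2),\qquad
D_{\phi_1}:\ell^p_{(\omega_2)}(\Lambda^2)\to M^p_{(\omega_2)}(\rr d)
\]
are already at our disposal. What I must supply is the nuclear analogue of Proposition \ref{Prop:MatrixOpSchatten}, namely the embedding
\[
\mathbb U^p(\omega,\boldsymbol J)\subseteq \mascN_p(\ell^\infty_{(\omega_1)}(J_1),\ell^p_{(\omega_2)}(J_2)),
\]
under the assumption $\omega_2(j_2)/\omega_1(j_1)\lesssim \omega(j_2,j_1)$. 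Once this is in hand, the factorization combined with the composition estimate in Proposition \ref{Prop:NuclQBanach}(2) yields $\op(a)\in \mascN_p(M^\infty_{(\omega_1)},M^p_{(\omega_2)})$.

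To prove the matrix embedding, I plan to apply Proposition \ref{Prop:MatrixMinimal} with $\mascB=\mascN_p(\ell^\infty_{(\omega_1)}(J_1),\ell^p_{(\omega_2)}(J_2))$. Hypothesis (1) of that proposition, the $p$-triangle inequality for $\mascB$, is exactly Proposition \ref{Prop:NuclQBanach} of Section \ref{sec4}, which asserts that $\mascN_p$ is a quasi-Banach space of order $p$. Hypothesis (2) reduces to the verification that for each pair $(k_2,k_1)\in\boldsymbol J$, the elementary matrix $A_{k_2,k_1}$, regarded as the rank-one operator $f\mapsto f(k_1)\,e_{k_2}$, has nuclear norm at most $C\,\omega(k_2,k_1)$. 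Its minimal nuclear representation is $e_{k_2}\otimes e_{k_1}^*$, and
\[
\nm{A_{k_2,k_1}}{\mascN_p}\le \nm{e_{k_1}^*}{(\ell^\infty_{(\omega_1)})'}\,\nm{e_{k_2}}{\ell^p_{(\omega_2)}}\le \frac{\omega_2(k_2)}{\omega_1(k_1)}\lesssim \omega(k_2,k_1),
\]
using that $|f(k_1)|\le \omega_1(k_1)^{-1}\nm f{\ell^\infty_{(\omega_1)}}$.

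The main conceptual obstacle is ensuring that this minimality machinery applies in the nuclear category: one must identify operators in $\mascN_p(\ell^\infty_{(\omega_1)},\ell^p_{(\omega_2)})$ with matrices in $\mathbb U_0'(\boldsymbol J)$ so that Proposition \ref{Prop:MatrixMinimal} can be invoked, and confirm that the density of $\mathbb U_0(\boldsymbol J)$ persists at the nuclear level. The remaining technicalities are routine: verifying that the tensor-product thickening preserves $M^p_{(\omega)}$-norms up to equivalence and preserves singular numbers/nuclear norms, exactly as carried out in the proof of Theorem \ref{Thm:KernelsSchatten}.
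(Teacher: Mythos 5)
Your proposal follows the paper's own route exactly: the paper likewise disposes of Theorem \ref{Thm:KernelsNuclear} by repeating the dimension-reduction/tensor-thickening argument from Theorem \ref{Thm:KernelsSchatten} with Theorem \ref{Thm:QuasiBanachNuclear} in place of Theorem \ref{Thm:QuasiBanachSchatten}, and proves Theorem \ref{Thm:QuasiBanachNuclear} via the same factorization $\op (a)=D_{\phi _1}\circ A\circ C_{\phi _2}$, the same rank-one estimate for the elementary matrices feeding into Proposition \ref{Prop:MatrixMinimal}, and the same composition stability from Proposition \ref{Prop:NuclQBanach}. The identification and density issues you flag at the end are handled no more explicitly in the paper than in your sketch, so your argument is at the same level of completeness as the published one.
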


\par

For the proofs of these results we again consider related questions for matrix operators.

\par

\begin{prop}\label{Prop:MatrixOpNuclear}
Let $p\in (0,1]$, $J_k$ be index set, $\omega _k$
be positive functions on $J_k$, $k=1,2$, let $\boldsymbol J = J_2\times J_1$ and suppose
$$
\frac {\omega _2(j_2)}{\omega _1(j_1)}\lesssim \omega (j_2,j_1),\quad (j_2,j_1)\in \boldsymbol J.
$$
Then $\mathbb U^p(\omega ,\boldsymbol J)\subseteq
\mascN _p (\ell ^\infty _{(\omega _1)}(J_1),\ell ^p_{(\omega _2)}(J_2))$.
\end{prop}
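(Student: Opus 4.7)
The plan is to mimic the proof of Proposition \ref{Prop:MatrixOpSchatten} by invoking the matrix minimality result, Proposition \ref{Prop:MatrixMinimal}, with $\mascB = \mascN_p(\ell^\infty_{(\omega _1)}(J_1),\ell^p_{(\omega _2)}(J_2))$. Two things need to be checked: that $\mascB$ satisfies the $p$-triangle inequality, and that every matrix unit $A_{k_2,k_1}$ lies in $\mascB$ with norm controlled by $\omega(k_2,k_1)$.

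First, Proposition \ref{Prop:NuclQBanach} guarantees that $\mascB$ is a quasi-Banach space of order $p$, which gives hypothesis (1) of Proposition \ref{Prop:MatrixMinimal} for free. Note that here $\ell^\infty_{(\omega_1)}(J_1)$ is an honest Banach space, so the definition of $p$-nuclearity in Subsection~\ref{subsec1.4} applies.

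Next I would observe that the matrix unit $A_{k_2,k_1}$, viewed as an operator from $\ell^\infty_{(\omega_1)}(J_1)$ to $\ell^p_{(\omega_2)}(J_2)$, is the rank-one map
\[
A_{k_2,k_1}f = f(k_1)\, e_{k_2} = e_{k_2}\otimes \delta_{k_1},
\]
where $e_{k_2}\in \ell^p_{(\omega_2)}(J_2)$ is the indicator at $k_2$ and $\delta_{k_1}\in (\ell^\infty_{(\omega_1)}(J_1))'$ is the evaluation functional $f\mapsto f(k_1)$. Direct computation gives
\[
\nm{\delta_{k_1}}{(\ell^\infty_{(\omega _1)})'} = \frac{1}{\omega_1(k_1)}
\quad\text{and}\quad
\nm{e_{k_2}}{\ell^p_{(\omega _2)}} = \omega_2(k_2),
\]
so from the single-term nuclear representation $A_{k_2,k_1}=e_{k_2}\otimes \delta_{k_1}$ we obtain
\[
\nm{A_{k_2,k_1}}{\mascN_p}\le
\nm{\delta_{k_1}}{(\ell^\infty_{(\omega _1)})'}\nm{e_{k_2}}{\ell^p_{(\omega _2)}}
= \frac{\omega_2(k_2)}{\omega_1(k_1)} \lesssim \omega(k_2,k_1),
\]
by the hypothesis relating $\omega$, $\omega_1$ and $\omega_2$. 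This verifies hypothesis (2) of Proposition \ref{Prop:MatrixMinimal}.

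Applying Proposition \ref{Prop:MatrixMinimal} then yields the continuous embedding $\mathbb U^p(\omega,\boldsymbol J)\subseteq \mascB$, which is the desired conclusion. There is no real obstacle in this argument, since the rank-one computation is elementary and the $p$-triangle inequality on $\mascN_p$ has already been packaged in Proposition \ref{Prop:NuclQBanach}; the only point worth stating carefully is the dual norm computation $\nm{\delta_{k_1}}{(\ell^\infty_{(\omega _1)})'}=1/\omega_1(k_1)$, which follows at once from $|f(k_1)|\le \omega_1(k_1)^{-1}\nm f{\ell^\infty_{(\omega_1)}}$.
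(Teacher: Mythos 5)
Your proposal is correct and follows essentially the same route as the paper: both verify the two hypotheses of Proposition \ref{Prop:MatrixMinimal} for $\mascB=\mascN_p(\ell^\infty_{(\omega_1)}(J_1),\ell^p_{(\omega_2)}(J_2))$, using Proposition \ref{Prop:NuclQBanach} for the $p$-triangle inequality and the one-term rank-one nuclear representation of the matrix unit to get $\nm{A_{k_2,k_1}}{\mascN_p}\le \omega_2(k_2)/\omega_1(k_1)\lesssim\omega(k_2,k_1)$. The only cosmetic difference is that the paper phrases the dual-norm bound via the embedding of $\ell^1_{(1/\omega_1)}$ into $(\ell^\infty_{(\omega_1)})'$ with explicit enumerations, whereas you compute $\nm{\delta_{k_1}}{(\ell^\infty_{(\omega_1)})'}=1/\omega_1(k_1)$ directly; the content is identical.
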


\par

\begin{proof}
The set $\mascN _p (\ell ^\infty _{(\omega _1)}(J_1),\ell ^p_{(\omega _2)}(J_2))$ is a
quasi-Banach space of order $p$. Furthermore, let $A_{j_2,j_1}$ be the same as in
Proposition \ref{Prop:MatrixMinimal} and let $\rho _k$ from $\mathbf Z_+$ to
$J_k$ be enumerations of $J_k$, $k=1,2$. Since
$$
\{ \delta _{j_{0,1},\rho _1(l)} \} _{l=1}^\infty \in \ell ^1_{(1/\omega _1\circ \rho _1)}(\mathbf Z_+)
\quad \text{and}\quad
\nm f{\ell ^1_{(1/\omega _1 \circ \rho _1)}} = \nm f{(\ell ^\infty _{(\omega _1 \circ \rho _1)})'}
$$
when $f\in \ell ^1_{(1/\omega _1 \circ \rho _1)}(\mathbf Z_+)$ we get
\begin{multline*}
\nm {A_{j_{0,2},j_{0,1}}}{\mascN _p (\ell ^\infty _{(\omega _1)}(J_1),
\ell ^p_{(\omega _2)}(J_2))}
\le
\nm {\{ \delta _{j_{0,1},\rho _1(l)} \} _{l=1}^\infty}{\ell ^1 _{(1/\omega _1\circ\rho _1)}}
\nm {\{ \delta _{j_{0,2},\rho _2(l)} \} _{l=1}^\infty}{\ell ^\infty _{(\omega _2\circ\rho _2)}}
\\[1ex]
\le
\frac {\omega _2(j_{0,2})}{\omega _1(j_{0,1})}
\lesssim
\omega (j_{0,2},j_{0,1})
\end{multline*}
The result now follows from Proposition \ref{Prop:MatrixMinimal}.
\end{proof}

\par

\begin{proof}[Proof of Theorem \ref{Thm:QuasiBanachNuclear}]
By \eqref{calculitransform} and Proposition \ref{Prop:ExpOpSTFT}
we may assume that $A=0$.
Let $\omega _0$, $A$, $\phi _1$, $\phi _2$ and $\Lambda$ be as in the proof
of Theorem \ref{Thm:QuasiBanachSchatten} with $p$ in place of $r$.

\par

Since $C_{\phi _2}$ and $D_{\phi _1}$ in \eqref{Eq:SamplCont}
and \eqref{Eq:SynthCont} are continuous and
$$
A\in \mascN _p(\ell ^\infty _{(\omega _1)}(\Lambda ^2),
\ell ^p _{(\omega _2)}(\Lambda ^2))
$$
by Proposition \ref{Prop:MatrixOpNuclear},
Proposition \ref{Prop:NuclQBanach} gives
$$
\op (a) = D_{\phi _1}\circ A\circ C_{\phi _2}
\in \mascN _p(M^\infty _{(\omega _1)}(\rr d),
M^p _{(\omega _2)}(\rr d)),
$$
which is the same as \eqref{Eq:NuclearPseudo}.
\end{proof}

\par

Finally, Theorem \ref{Thm:KernelsNuclear} now follows by similar arguments
as in the proof of Theorem \ref{Thm:KernelsSchatten}, where Theorem
\ref{Thm:QuasiBanachNuclear} is used instead of Theorem \ref{Thm:QuasiBanachSchatten}.
The details are left for the reader.

\par

\end{document}